\numberwithin{equation}{section}       
 \theoremstyle{plain}    
 \newtheorem{thm}{Theorem}[section]
 \numberwithin{equation}{section} 
 \numberwithin{figure}{section} 
 \theoremstyle{plain}
 \theoremstyle{plain}    
 \theoremstyle{plain}    
 \newtheorem{pro}[thm]{Proposition} 
 \theoremstyle{plain}    
 \theoremstyle{plain}
 \newtheorem{rem}[thm]{Remark}
\theoremstyle{plain} 
\newtheorem{ex}[thm]{Example}
\newtheorem*{thmA}{Theorem A}
\newtheorem*{proB}{Proposition B} 
\newtheorem*{proC}{Proposition C} 
\newtheorem*{pro*}{Proposition} 
\newtheorem*{defi*}{Definition} 
\newtheorem*{ex*}{Example} 
\theoremstyle{plain}
\newtheorem{defi}[thm]{Definition}
\newcommand{\C}{{\mathbb{C}}}
\newcommand{\R}{{\mathbb{R}}}
\newcommand{\MA}{\mathrm{MA}}
\newcommand{\MAH}{\mathrm{MAH}}
\newcommand{\cE}{{\mathcal{E}}}
\renewcommand{\a}{\alpha}
\renewcommand{\b}{\beta}
\newcommand{\f}{\varphi}
\newcommand{\Capis}{{\rm Cap}_{\psi}}
\newcommand{\psh}{{\rm PSH}}
\newcommand{\Amp}{\mathrm{Amp}\,}
\newcommand{\vol}{\operatorname{vol}}
\newcommand{\pistar}{\pi^{\star}}
\newcommand{\pistarb}{\pi_{\star}}
\newcommand{\te}{\theta}
\begin{document}

\setcounter{tocdepth}{2}

\title{Finite Pluricomplex energy measures}


\author{Eleonora Di Nezza}

\address{Department of Mathematics, Imperial College London London, SW7 2AZ, United Kingdom}

\email{e.di-nezza@imperial.ac.uk}

\begin{abstract}
We investigate probability measures with finite pluricomplex energy. We give criteria insuring that a given measure has finite energy and test these on various examples. We show that this notion is a biholomorphic but not a bimeromorphic invariant.
\end{abstract} 

\maketitle

\tableofcontents

\newpage
\section*{Introduction}
Let $X$ be a compact K\"ahler manifold and pick $\alpha\in H^{1,1}(X,\R)$ a big cohomology class. In \cite{bbgz} the authors have defined the \emph{electrostatic energy} $E^*(\mu)$ of a probability measure $\mu$ on $X$ which is a  pluricomplex analogue of the classical logarithmic energy of a measure.\\
\indent They then give a useful caracterization (that for our purposes we will take as definition) of measures $\mu$ with finite energy:
\begin{defi*}
A non-pluripolar probability measure $\mu$ has finite energy in a big class $\alpha$ if and only if there exists $T\in \cE^1(X,\alpha)$ such that 
$$\mu=\frac{\langle T^n \rangle}{\vol(\alpha)}.$$
In this case we write $\mu\in \MA(\cE^1(X, \alpha))$.
\end{defi*}
We recall that $\langle T^n \rangle$ is the so called non-pluripolar measure and the class $\cE(X,\alpha)$ is the set of positive closed currents $T\in \alpha$ with \emph{full Monge-Amp\`ere mass}, i.e. such that $\int_X \langle T^n\rangle=\vol(\alpha)$, while $\cE^1(X,\alpha)\subset \cE(X,\alpha)$ is the set of currents having finite energy. More precisely, we say that $T\in \cE(X,\alpha)$ has finite energy iff
$$\int_X |\f-V_{\theta}|\langle T^n\rangle<+\infty$$
where $T=\theta+dd^c\f$ and $\theta+dd^c V_\theta$ is a current with minimal singularities in $\alpha$.\\

Measures with finite energy have played a crucial role when solving degenerate complex Monge-Amp\`ere equations using the variational approach (see \cite{bbgz}). In this note we investigate such a property and we give some concrete examples/counterexamples of measures having finite energy.\\
\indent In particular, we wonder whether this notion is a bimeromorphic invariant. It turns out that it is invariant under biholomorphisms but not under bimeromorphisms. The latter is a very subtle point and it will be explained in details.

We also consider the dependence on the cohomology classes. We prove the following:
\begin{thmA} 
The finite energy condition is a biholomorphic invariant but it is not, in general, a bimeromorphic invariant.
\end{thmA}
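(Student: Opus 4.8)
I treat the two assertions separately.

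\smallskip
\emph{Step 1 (biholomorphic invariance).} This is a naturality check. Let $f\colon X\to X'$ be a biholomorphism and $\alpha'$ big on $X'$. Then $f^{*}\colon H^{1,1}(X',\R)\to H^{1,1}(X,\R)$ is an isomorphism preserving the big cone and the volume, and it induces an order-preserving bijection between closed positive currents (resp.\ $\theta'$-psh functions) on $X'$ and on $X$; in particular it carries currents with minimal singularities to currents with minimal singularities, so $V_{f^{*}\theta'}=f^{*}V_{\theta'}$. The non-pluripolar product is local and built from truncated potentials, hence commutes with $f^{*}$: $\langle(f^{*}T)^{n}\rangle=f^{*}\langle T^{n}\rangle$; and since $f$ preserves pluripolar sets, $\int_X\langle(f^{*}T)^{n}\rangle=\int_{X'}\langle T^{n}\rangle$, so full Monge--Amp\`ere mass is preserved. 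Writing $T=\theta'+dd^{c}\varphi'$, change of variables gives $\int_X|f^{*}\varphi'-V_{f^{*}\theta'}|\langle(f^{*}T)^{n}\rangle=\int_{X'}|\varphi'-V_{\theta'}|\langle T^{n}\rangle$, whence $T\in\cE^{1}(X',\alpha')\iff f^{*}T\in\cE^{1}(X,f^{*}\alpha')$. As $f^{*}\bigl(\langle T^{n}\rangle/\vol(\alpha')\bigr)=\langle(f^{*}T)^{n}\rangle/\vol(f^{*}\alpha')$, we get $\mu\in\MA(\cE^{1}(X',\alpha'))\iff f^{*}\mu\in\MA(\cE^{1}(X,f^{*}\alpha'))$ (the converse by applying this to $f^{-1}$).

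\smallskip
\emph{Step 2 (a bimeromorphic counterexample).} The plan is to produce one explicit instance. Let $X=\PP^{n}$, $n\ge2$, fix $x_{0}\in X$ and let $\pi\colon Y\to X$ be the blow-up at $x_{0}$, with exceptional divisor $E$. In a holomorphic chart at $x_{0}$ with coordinate $z$, pick $a\in\bigl(\tfrac32,\tfrac{2n+1}{n+1}\bigr)$ --- an interval which is non-empty precisely because $n\ge2$ --- and let $\mu$ be a probability measure on $X$ equal near $x_{0}$ to $c_{0}\,|z|^{-2n}\bigl(\log\tfrac{1}{|z|}\bigr)^{-a}\,dV$ (with $c_{0}>0$ small, so the density is integrable of mass $<1$) and to a smooth positive volume form away from $x_{0}$; this $\mu$ is non-pluripolar. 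I claim
\begin{enumerate}[(i)]
\item $\mu\notin\MA(\cE^{1}(X,\alpha))$ for \emph{every} big class $\alpha$ on $X$;
\item $\pi^{*}\mu\in\MA\bigl(\cE^{1}(Y,\beta)\bigr)$ for $\beta:=\pi^{*}[\omega_{FS}]-\e[E]$, $\e>0$ small (a K\"ahler class on $Y$).
\end{enumerate}
Since every big class on $\PP^{n}$ is a positive multiple of $[\omega_{FS}]$, (i) says $\mu$ has finite energy for \emph{no} big class on $X$, while $\pi$ identifies $\mu$ with $\pi^{*}\mu$ (legitimate since $\mu(\{x_{0}\})=0$; also $\pi_{*}\beta=[\omega_{FS}]$), so (i)--(ii) display a measure which, transported by the bimeromorphism $\pi$, admits a finite-energy representation in a big class on one side but in no big class on the other. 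For (i): if $T=\theta+dd^{c}\varphi\in\cE^{1}(X,\alpha)$ with $\mu=\langle T^{n}\rangle/\vol(\alpha)$, restrict to a small ball $B\ni x_{0}$ where $\theta=dd^{c}\phi$, $\phi$ smooth; then $\psi:=\varphi+\phi$ is psh on $B$ with $(dd^{c}\psi)^{n}=\vol(\alpha)\mu$. Comparing $\psi$ with the radial solution $g$ of the same equation for the model density, for which $|g(z)|\asymp\bigl(\log\tfrac{1}{|z|}\bigr)^{(n+1-a)/n}$, the comparison principle gives $|\varphi(z)|\gtrsim\bigl(\log\tfrac{1}{|z|}\bigr)^{(n+1-a)/n}$ near $x_{0}$; hence, $V_{\theta}$ being bounded as $\alpha$ is K\"ahler,
$$\int_X|\varphi-V_{\theta}|\,\langle T^{n}\rangle\ \gtrsim\ \int_{|z|<\rho_{0}}\bigl(\log\tfrac{1}{|z|}\bigr)^{\frac{n+1-a}{n}}|z|^{-2n}\bigl(\log\tfrac{1}{|z|}\bigr)^{-a}\,dV\ =\ \int_{t_{0}}^{\infty}t^{\frac{(n+1)(1-a)}{n}}\,dt\ =\ +\infty,$$
because $\tfrac{(n+1)(1-a)}{n}>-1\iff a<\tfrac{2n+1}{n+1}$; this contradicts $T\in\cE^{1}$. (Equivalently: $\mu(B_{r})\asymp(\log\tfrac{1}{r})^{1-a}$ whereas $\Capo(B_{r})\asymp(\log\tfrac{1}{r})^{-n}$, violating the capacity domination characterizing $\MA(\cE^{1})$.) For (ii): in a blow-up chart in which $s$ is a local equation of $E$, the Jacobian factor $|s|^{2(n-1)}$ tempers the singularity, so $\pi^{*}\mu\asymp|s|^{-2}\bigl(\log\tfrac{1}{|s|}\bigr)^{-a}\,dV$ near a generic point of $E$; solving $(\omega_{Y}+dd^{c}v)^{n}=\vol(\beta)\pi^{*}\mu$ for a K\"ahler form $\omega_{Y}\in\beta$ one finds $|v|\asymp\bigl(\log\tfrac{1}{|s|}\bigr)^{2-a}$ near $E$ (so $v$ has zero Lelong numbers and $\omega_{Y}+dd^{c}v$ has full mass), whence
$$\int_Y|v-V_{\omega_{Y}}|\,\langle(\omega_{Y}+dd^{c}v)^{n}\rangle\ \asymp\ \int_{|s|<\rho_{0}}\bigl(\log\tfrac{1}{|s|}\bigr)^{2-a}|s|^{-2}\bigl(\log\tfrac{1}{|s|}\bigr)^{-a}\,dV\ =\ \int_{t_{0}}^{\infty}t^{2-2a}\,dt\ <\ +\infty$$
since $a>\tfrac32$; thus $\pi^{*}\mu\in\MA(\cE^{1}(Y,\beta))$. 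Together (i)--(ii) prove finite energy is not a bimeromorphic invariant. (There is no contradiction with Step 1: had $\mu$ finite energy in some $\alpha$ on $X$, then $\pi^{*}\mu$ would have finite energy in $\pi^{*}\alpha$ on $Y$ --- one checks this exactly as in Step 1, using that modifications preserve volumes and that $V_{\pi^{*}\theta}=\pi^{*}V_{\theta}$ as $\pi$ contracts only codimension $\ge2$; the point is that $Y$ carries strictly more big classes than $X$, and one of these works for $\pi^{*}\mu$ although none on $X$ works for $\mu$.)

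\smallskip
\emph{Main obstacle.} The delicate step is (i) --- ruling out finite energy for \emph{every} big class on $X$. Taking $X=\PP^{n}$ is what makes this tractable: there all big classes are K\"ahler, so $V_{\theta}$ is bounded and the radial comparison above is uniform in the (unique up to scaling) class. On a general $X$ one would instead need, for each big $\alpha$, to control $\varphi$ near $x_{0}$ --- immediate when $x_{0}\in\Amp(\alpha)$, but requiring a comparison between $\Capo$ and the local Bedford--Taylor capacity when $x_{0}\notin\Amp(\alpha)$. Two further points must be pinned down: that the model $\omega_{Y}$-psh function on $Y$ (of type $-\bigl(\log\tfrac{1}{\|\sigma_{E}\|}\bigr)^{2-a}$, $\sigma_{E}$ the section of $\cO_{Y}(E)$ with a smooth metric) genuinely lies in $\psh(Y,\omega_{Y})$ and yields a full-mass current with the stated Monge--Amp\`ere density; and that the radial supersolution $g$ on $X$ can be compared with $\psi$ via the Bedford--Taylor comparison principle despite $\psi$ being a priori only a non-pluripolar local potential. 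Both are standard but need to be written out with care.
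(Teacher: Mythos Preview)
Your approach differs from the paper's in both halves. For biholomorphic invariance you argue by direct naturality of all constructions under $f^{*}$; the paper instead proves something stronger (its Proposition~\ref{prok}): on a fixed manifold, $\mu\in\MA(\cE^{1}(X,\alpha))\Leftrightarrow\mu\in\MA(\cE^{1}(X,\beta))$ for any two \emph{K\"ahler} classes, using the dual characterization $\mu\in\MA(\cE^{1})\iff\cE^{1}\subset L^{1}(\mu)$ together with $\omega_{1}\le C\omega_{2}$ and the inclusion $\cE^{1}(X,\omega_{2})\subset\cE^{1}(X,C\omega_{2})$. Your argument is the quicker route to the theorem as stated; the paper's buys cohomology-class independence within the K\"ahler cone.

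For the counterexample the two constructions run in opposite directions. The paper works on the blow-up $X\to\PP^{2}$, \emph{defines} $\mu$ there as $(\tilde\omega+dd^{c}\pi^{*}\tilde\varphi_{p})^{2}/\vol(\tilde\omega)$ for an explicit potential, and shows $\mu\notin\MA(\cE^{1}(X,\{\pi^{*}\omega_{FS}\}))$ by exhibiting a single test function $\psi=\pi^{*}\tilde\varphi_{\varepsilon}\in\cE^{1}(X,\pi^{*}\omega_{FS})$ with $\int(-\psi)\,d\mu=+\infty$ --- a direct integral computation, no comparison principle. Your route (explicit density on $\PP^{n}$, finite energy appearing only after blow-up) is equally valid, and your claim (ii) is exactly Proposition~\ref{divisorial} on divisorial singularities: your exponent $a$ is the paper's $1+\alpha$, and $a>3/2\Leftrightarrow\alpha>1/2$. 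The soft spot is your argument for (i). The comparison-principle step bounding the unknown $\psi$ by the radial model is delicate when $\psi$ is a priori only a local potential of a full-mass current (as you acknowledge), and the parenthetical ``capacity domination characterizing $\MA(\cE^{1})$'' is not a known \emph{necessary} condition --- Proposition~\ref{cap} and its relatives give only sufficiency. The clean fix, which also aligns with the paper's method, is to bypass comparison altogether: choose $q\in\bigl(a-1,\,\tfrac{n}{n+1}\bigr)$ (non-empty precisely because $a<\tfrac{2n+1}{n+1}$), set $\psi_{q}=-(-\log\|z\|)^{q}$ globalized by a cut-off, note $\psi_{q}\in\cE^{1}(\PP^{n},\omega_{FS})$ since $q<\tfrac{n}{n+1}$, and compute $\int(-\psi_{q})\,d\mu\asymp\int^{\infty}s^{\,q-a}\,ds=+\infty$ since $q>a-1$. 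This proves (i) via the dual characterization, with no appeal to comparison or capacity.
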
 
We prove more generally that for any $\alpha,\beta$ K\"ahler classes
$$\mu\in \MA(\cE^1(X,\alpha)) \Longleftrightarrow \mu\in \MA(\cE^1(X,\beta)).$$

\begin{proB}
Let $\pi: X\rightarrow \mathbb{P}^2$ be the blow-up at one point of the complex projective plane. Then there exists a probability measure $\mu$ and a K\"ahler class $\{\tilde{\omega}\}$ on $X$ such that 

$$\mu\in \MA\left(\cE^1(X,\{\tilde{\omega}\})\right) \quad \mbox{but} \quad\mu\notin \MA\left(\cE^1(X,\{\pistar \omega_{FS}\})\right)$$
and furthermore, $ \pistarb\mu\notin \MA\left(\cE^1(\mathbb{P}^2,\{\pistarb \tilde{\omega} \})\right).$
\end{proB}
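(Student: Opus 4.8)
The plan is to realise $\mu$ as the Monge--Amp\`ere measure of an explicit potential on $X$ whose only singularity is a mild radial singularity \emph{transverse to the exceptional divisor} $E$, calibrated so that it is just tame enough to have finite energy on $X$ — where $E$ is a curve — but, after $E$ is contracted to the point $p$, becomes too singular to have finite energy on $\PP^2$ — where $p$ is a point. Quantitatively, the radial potential $u_\lambda=-(-\log|z|)^{\lambda}$ on $\C^n$ (with $0<\lambda<1$) has locally finite Monge--Amp\`ere mass but finite energy only for $\lambda<n/(n+1)$; the thresholds that matter here are $1/2$ (transverse to the curve $E$, an ``$n=1$'' effect) and $2/3$ (at the point $p$, an ``$n=2$'' effect), and the contraction shifts the exponent so as to land strictly between them. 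Concretely, fix an affine chart $\C^2\ni p=(0,0)$ of $\PP^2$ with coordinates $(z_1,z_2)$ and the standard chart $U_1\ni(z_1,t)$ of $X$ with $\pi(z_1,t)=(z_1,z_1t)$ and $E\cap U_1=\{z_1=0\}$; let $g$ be a fixed quasi-psh function on $X$ equal to $\log|z_1|$ near $E\cap U_1$ (a global potential of the class $\{E\}$, with $dd^cg=[E]-\theta$, $\theta$ smooth). Fix $\alpha\in(1/3,1/2)$ (say $\alpha=2/5$) and a smooth convex increasing function $\chi$ on $\R$, bounded above, with $\chi(u)=-(-u)^{\alpha}$ for $u\le-1$; note $\chi'(-\infty)=0$. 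Set $w:=\chi\circ g$. Since $\chi$ and $\chi'$ are bounded, $w$ is $\tilde\omega$-psh for a suitable K\"ahler form $\tilde\omega$ on $X$, and we put
\[
\mu:=\frac{\langle(\tilde\omega+dd^cw)^2\rangle}{\vol(\tilde\omega)}.
\]

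\textbf{Finite energy on $X$.}
By the Definition above, it suffices to check that $\mu$ is non-pluripolar and that $T:=\tilde\omega+dd^cw\in\cE^1(X,\{\tilde\omega\})$; then $\mu\in\MA(\cE^1(X,\{\tilde\omega\}))$, and non-pluripolarity follows from $T\in\cE(X,\{\tilde\omega\})$. Near $E\cap U_1$ one has $(dd^cw)^2=0$ (there $w$ depends only on $z_1$), and $\tilde\omega\wedge dd^cw$ has density $\simeq\chi''(\log|z_1|)\,|z_1|^{-2}=(-\log|z_1|)^{\alpha-2}|z_1|^{-2}$ against Lebesgue measure; the remaining terms of $(\tilde\omega+dd^cw)^2$ near $E$ (also on the other chart of $X$, where $dd^cw$ is no longer of rank one) and the bounded part of $X$ away from $E$ contribute finitely. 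Since $V_{\tilde\omega}$ is bounded, the energy is therefore, up to a finite quantity,
\[
\int^{\infty}(-\log|z_1|)^{\alpha}\,(-\log|z_1|)^{\alpha-2}\,\frac{d|z_1|}{|z_1|}\;=\;\int^{\infty}s^{2\alpha-2}\,ds\;<\;\infty\qquad\Longleftrightarrow\qquad\alpha<\tfrac12,
\]
which holds. Full Monge--Amp\`ere mass of $\langle(\tilde\omega+dd^cw)^2\rangle$ follows from $\chi'(-\infty)=0$ (no mass escapes to $E$), a routine computation for potentials of the form $\chi\circ g$. This gives the first assertion.

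\textbf{Failure downstairs and in $\{\pistar\omega_{FS}\}$.}
Here I would use that if $\nu\in\MA(\cE^1(Y,\beta))$ with $\beta$ K\"ahler (or big with bounded minimal-singularity potential), then $\cE^1(Y,\beta)\subseteq L^1(\nu)$ (see \cite{bbgz}). Thus it is enough to exhibit one $\phi\in\cE^1(\PP^2,\{\omega_{FS}\})$ with $\int\phi\,d(\pistarb\mu)=-\infty$: then $\pistarb\mu\notin\MA(\cE^1(\PP^2,\{\omega_{FS}\}))$, and since every K\"ahler class on $X$ is of the form $a\,\pistar\{\omega_{FS}\}-b\,\{E\}$ so that $\{\pistarb\tilde\omega\}=a\{\omega_{FS}\}$ with $a>0$, the equivalence of finite energy across K\"ahler classes (Theorem~A, applied on $\PP^2$) yields $\pistarb\mu\notin\MA(\cE^1(\PP^2,\{\pistarb\tilde\omega\}))$; moreover $\pistar\phi\in\cE^1(X,\{\pistar\omega_{FS}\})$ (pullback commutes with non-pluripolar products over the modification, and $V_{\pistar\omega_{FS}}$ is bounded) with $\int\pistar\phi\,d\mu=\int\phi\,d(\pistarb\mu)=-\infty$, so $\mu\notin\MA(\cE^1(X,\{\pistar\omega_{FS}\}))$. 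To build $\phi$: pushing $\mu$ forward and dividing by the Jacobian $|z_1|^2$ of the contraction, on the cone $\{|z_2|\le\varepsilon|z_1|\}$ near $p$ the measure $\pistarb\mu$ has density $\gtrsim(-\log|z|)^{\alpha-2}|z|^{-4}$ against Lebesgue measure (this is, up to a constant, the Monge--Amp\`ere measure of $u_{(\alpha+1)/2}$ near $p$, and $(\alpha+1)/2\in(\tfrac23,\tfrac34)$ already signals infinite energy at $p$). Choose $\beta$ with $1-\alpha\le\beta<\tfrac23$ — possible precisely because $\alpha>\tfrac13$ — and let $\phi$ be a global $\omega_{FS}$-psh function on $\PP^2$ equal to $-(-\log|z|)^{\beta}$ near $p$; then $\phi\in\cE^1(\PP^2,\{\omega_{FS}\})$ since $\beta<\tfrac23$, while on the cone
\[
\int|\phi|\,d(\pistarb\mu)\;\gtrsim\;\int^{\infty}(-\log|z|)^{\beta}\,(-\log|z|)^{\alpha-2}\,\frac{d|z|}{|z|}\;=\;\int^{\infty}s^{\alpha+\beta-2}\,ds\;=\;+\infty
\]
because $\alpha+\beta\ge1$; as $\phi\le0$ this forces $\int\phi\,d(\pistarb\mu)=-\infty$.

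\textbf{Main obstacle.}
The crux is the joint bookkeeping of the two energy thresholds together with the way the blow-down transforms the exponent: one must show that the transverse singularity $\chi\circ g$ along the curve $E$ stays below the dimension-one threshold $\alpha<1/2$, yet that $\pistarb$ of its Monge--Amp\`ere measure concentrates at $p$ with \emph{effective} exponent $(\alpha+1)/2$ lying above the dimension-two threshold $2/3$ for every admissible $\alpha$, i.e.\ that the window $(1/3,1/2)$ is nonempty and simultaneously does both jobs. Subsidiary technical points to pin down are: that $\langle(\tilde\omega+dd^cw)^2\rangle$ has full mass, which relies on $\chi'(-\infty)=0$; the control of the non-principal terms of $(\tilde\omega+dd^cw)^2$ near $E$, including on the second chart of $X$ where $dd^cw$ has rank two; the identity $\pistar\langle(\omega_{FS}+dd^c\phi)^2\rangle=\langle(\pistar\omega_{FS}+dd^c\pistar\phi)^2\rangle$ together with the boundedness of $V_{\pistar\omega_{FS}}$, needed to land $\pistar\phi$ in $\cE^1(X,\{\pistar\omega_{FS}\})$; and the local-to-global step producing an honest $\phi\in\cE^1(\PP^2,\{\omega_{FS}\})$ from the model $-(-\log|z|)^{\beta}$.
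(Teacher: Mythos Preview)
Your construction is correct and is essentially the paper's own argument, phrased intrinsically on $X$ rather than via pullback from $\PP^2$. The paper takes $\tilde\omega=(\pistar\omega'-[E])+\pistar\omega_{FS}$ for a current $\omega'$ with potential $\varepsilon\chi(z)\log\|z\|$ near $p$, and uses the potential $\pistar\tilde\varphi_p$ with $\tilde\varphi_p\sim-(-\log\|z\|)^{p}$, $p=\tfrac12-\delta$; since $\pistar\log\|z\|=g+\text{smooth}$ near $E$, this is exactly your $w=\chi\circ g$ to leading order. The paper's test function is $\psi=\pistar\tilde\varphi_\varepsilon$ with $\varepsilon=\tfrac23-\delta'$, which is your $\pistar\phi$ with $\phi\sim-(-\log\|z\|)^\beta$, $\beta<\tfrac23$; and the divergence condition there, $2-p-\varepsilon\le1$, is your $\alpha+\beta\ge1$. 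The only real difference is bookkeeping: the paper's choice of $\tilde\omega$ makes the push-forward identity $\vol(\tilde\omega)\,\pistarb\mu=(\omega'+\omega_{FS}+dd^c\tilde\varphi_p)^2$ immediate, so the divergent cross term $dd^c\log\|z\|\wedge dd^c\chi(\log\|z\|)$ appears directly on $\PP^2$ without having to track the Jacobian of $\pi$ or restrict to a cone; your version recovers the same density $(-\log|z|)^{\alpha-2}|z|^{-4}$ by the change of variables. One small correction: in the second chart of $X$ one still has $g=\log|z_2|+\text{smooth}$, so $(dd^cw)^2$ is not zero but is governed by $\chi''\chi'(g)\,dg\wedge d^cg\wedge dd^cg$, which contributes a term of order $(-\log|z_1|)^{2\alpha-3}|z_1|^{-2}$ and is harmless for $\alpha<\tfrac12$; this does not affect your argument.
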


\medskip

\indent Working in the  K\"ahler setting and following ideas developed in \cite{DL1, DL2}, we are able to insure that a given non-pluripolar probability measure $\mu$ has finite energy when it is dominated by the generalized Monge-Amp\`ere capacity.
More precisely, if we assume that there exists a constant $A>0$ such that $$\mu \leq A\, \Capis ^{1+\varepsilon}$$
for some $\varepsilon>0$, where  $\psi\in \cE^1(X,\omega/2)$, then $\mu$ has finite energy in $\{\omega\}$ (Proposition \ref{cap}).


\medskip

We also give various criteria insuring that a given probability measure has finite energy (see Section \ref{smoothWeights}, Propositions \ref{toric} and \ref{divisorial}) and consider different types of singular behavior (radial and toric measures, divisorial singularities).\\
\indent Let $D=\sum_j D_j$ is a simple normal crossing divisor and for each $j$, let $s_j$ be the defining section of $D_j$. Fix a hermitian metric $h_j$ on the holomorphic line bundle defined by $D_j$ such  that $|s_j|:=\vert s_j\vert_{h_j} \leq 1/e$. \\
We consider, for example, measures $\mu=f dV$ with densities that can be written as
$$
f= \frac{h}{\prod_{j=1}^n \vert s_j\vert^2 (-\log \vert s_j\vert)^{1+\alpha}}
$$
where $h$ is a bounded function, $1/B\leq h\leq B$ for some $B>0$ and $\alpha>0$. 

\noindent In these special cases we can give a complete caracterization:
\begin{proC}
Let $\omega$ be a K\"ahler form. The following holds:
$$\mu\in \MA(\cE^1(X,\omega)) \quad \mbox{if and only if} \quad \alpha >1/2.$$
\end{proC}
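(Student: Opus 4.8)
The plan is to let everything be governed by the explicit models
$$\rho_\beta\ :=\ -\sum_{j=1}^n\bigl(-\log|s_j|_{h_j}^2\bigr)^\beta\ =\ \sum_{j=1}^n\chi_\beta\bigl(\log|s_j|_{h_j}^2\bigr),\qquad 0<\beta<1,$$
where $\chi_\beta(s)=-(-s)^\beta$ is increasing, convex and has bounded derivative on $(-\infty,-2]$ (recall $\log|s_j|_{h_j}^2\le-2$). Writing $t_j:=-\log|s_j|_{h_j}^2$ (so $t_j=-\log|z_j|^2+O(1)$ in local coordinates at a point of $D$, and $t_j\ge 2$), one reads off that $\delta\rho_\beta\in\psh(X,\omega)$ for $\delta=\delta(\beta)$ small, that $\langle(\omega+dd^c\rho_\beta)^n\rangle$ has size $\asymp\prod_{i=1}^k t_i^{\beta-2}|z_i|^{-2}\,dV$ near a stratum of $D$ of codimension $k$, and --- the crux --- that
$$\rho_\beta\in\cE^1(X,\omega)\quad\Longleftrightarrow\quad\beta<\tfrac12 .$$
Indeed $\int_X(-\rho_\beta)\,\langle(\omega+dd^c\rho_\beta)^n\rangle$ reduces to one--variable integrals $\int_0(-\log r)^{2\beta-2}\,r^{-1}\,dr$, finite exactly when $2\beta-2<-1$; and $\langle(\omega+dd^c\rho_\beta)^n\rangle$ carries no mass on $D$ (its mass on a $\delta$--tube around $D$ tends to $0$), so there is no mass loss and the full--mass condition holds. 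This threshold is the source of the $\tfrac12$ in the statement.

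\emph{The direction $\alpha\le\tfrac12\Rightarrow\mu\notin\MA(\cE^1(X,\omega))$.} By \cite{bbgz}, $\mu\in\MA(\cE^1(X,\omega))$ is equivalent to $E^*(\mu)<+\infty$, and $E^*(\mu)<+\infty$ forces $\cE^1(X,\omega)\subset L^1(\mu)$ (immediate from the definition of $E^*$ as the Legendre transform of the Monge--Amp\`ere energy). Hence it suffices to produce one $u\in\cE^1(X,\omega)$ with $\int_X|u|\,d\mu=+\infty$. I would take the borderline model with a logarithmic correction,
$$u\ :=\ -\delta\sum_{j=1}^n\frac{t_j^{1/2}}{\log t_j}\,,$$
$\delta$ small so that $u\in\psh(X,\omega)$. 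Its energy integral reduces to $\int_0\frac{dr}{r\,(-\log r)\,\bigl(\log(-\log r)\bigr)^2}<+\infty$ (again with no mass lost on $D$), so $u\in\cE^1(X,\omega)$; whereas $\int_X|u|\,d\mu$ reduces, near a component of $D$, to $\int_0\frac{dr}{r\,(-\log r)^{1/2+\alpha}\,\log(-\log r)}$, which diverges exactly when $\tfrac12+\alpha\le 1$. This contradiction gives the claim. (For $\alpha<\tfrac12$ one could instead use $u=\delta\rho_\beta$ with $\beta\in[\alpha,\tfrac12)$; the logarithmic refinement is needed only to reach the endpoint $\alpha=\tfrac12$.)

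\emph{The direction $\alpha>\tfrac12\Rightarrow\mu\in\MA(\cE^1(X,\omega))$.} Here I would invoke the capacity criterion of Proposition \ref{cap}: it is enough to find $\psi\in\cE^1(X,\omega/2)$ and $\varepsilon>0$ with $\mu\le A\,\Capis^{1+\varepsilon}$. Pick $\varepsilon\in(0,2\alpha-1)$ and $\gamma\in\bigl(1-\tfrac{\alpha}{1+\varepsilon},\ \tfrac12\bigr)$ --- a nonempty interval precisely because $\alpha>\tfrac12$ --- and set $\psi:=\delta\rho_\gamma$ with $\delta$ small, so $\psi\in\cE^1(X,\omega/2)$ by the same analysis. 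The inequality $\mu\le A\,\Capis^{1+\varepsilon}$ is then checked by the weighted volume--capacity comparison of \cite{DL1,DL2}; the balancing of the exponents is already visible on a small ball $B=B_r(p)$. Away from $D$ and near a stratum of codimension $k<n$ one has $\Capis(B)\gtrsim\Capo(B)\asymp(-\log r)^{-n}$, which beats $\mu(B)\asymp(-\log r)^{-k\alpha}\,r^{2(n-k)}$ by the polynomial factor; near the deepest stratum $D_1\cap\dots\cap D_n$ there is no polynomial factor, $\mu(B)\asymp(-\log r)^{-n\alpha}$, and one uses $\Capis(B)\ge\int_B\langle(\omega+dd^c\psi)^n\rangle\asymp(-\log r)^{\,n(\gamma-1)}$, so that $\mu(B)\le A\,\Capis(B)^{1+\varepsilon}$ holds precisely because $(1-\gamma)(1+\varepsilon)\le\alpha$.

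\emph{Main obstacle.} The hard direction is $\alpha>\tfrac12\Rightarrow$ finite energy. It hinges on sharp lower bounds for the generalized capacity $\Capis$ of small balls located at \emph{every} stratum of the normal crossing divisor (the naive bound $\Capis(B)\ge\int_B\langle(\omega+dd^c\psi)^n\rangle$ alone is too weak on the intermediate strata, where one genuinely needs $\Capis(B)\gtrsim\Capo(B)$), and on the extension of $\mu\le A\,\Capis^{1+\varepsilon}$ from balls to arbitrary Borel sets --- precisely the content of \cite{DL1,DL2}. The converse direction is by comparison soft, once one has $\cE^1\subset L^1(\mu)$; its only delicate point is calibrating the logarithmic correction so as to cover the endpoint $\alpha=\tfrac12$.
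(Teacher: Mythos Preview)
Your argument is correct, and it diverges from the paper's most notably in the direction $\alpha\le\tfrac12$. The paper never tests $\mu$ against functions in $\cE^1$; it works with the unique $\varphi\in\cE(X,\omega)$ solving $\MA(\varphi)=\mu$ and bounds $\varphi$ pointwise by the model functions. For $\alpha>\tfrac12$ it quotes \cite[Theorem~2]{DL1} for a lower bound $\varphi\ge -a_1\sum_j(-\log|s_j|)^q-A_1$ with $q\in(1-\alpha,\tfrac12)$, so $\varphi\in\cE^1$; for $\alpha<\tfrac12$ it quotes \cite[Proposition~4.4]{DL1} for an upper bound $\varphi\le -a_2\sum_j(-\log|s_j|)^p+A_2$ with $p\in(\tfrac12,1-\alpha)$, so $\varphi\notin\cE^1$; at the endpoint $\alpha=\tfrac12$ it uses domination, observing that $u_{1/2}:=-b\sum_j(-\log|s_j|)^{1/2}\notin\cE^1$ while $\MA(u_{1/2})\lesssim\mu$, hence $\mu\notin\MA(\cE^1)$. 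Your route for $\alpha\le\tfrac12$ --- exhibit an explicit $u\in\cE^1$ with $\int_X|u|\,d\mu=+\infty$, using a log-corrected model at the endpoint --- is more self-contained (no barrier estimates on $\varphi$), at the cost of the second-derivative computation needed to place your corrected $u$ in $\cE^1$. For $\alpha>\tfrac12$ the two routes coincide in substance: the capacity bound $\mu\le A\,\Capis^{1+\varepsilon}$ you invoke is precisely the engine behind \cite[Theorem~2]{DL1}, so the paper simply cites the output while you re-enter one step earlier via Proposition~\ref{cap}.
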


\medskip

Let us describe the contents of the paper. We first recall some definitions and known facts. In Section \ref{FiniteE} we give some concrete examples of measures with finite energy. We then discuss the invariance properties of finite energy measures and we give a tricky counterexample insuring the non invariance under bimeromorhic maps (Section \ref{NonS}).  

\section{Preliminaries}\label{pre}
\subsection{Big classes and the non-pluripolar product}
Let $X$ be a compact K{\"a}hler ma\-ni\-fold of complex dimension $n$ and let $\a\in H^{1,1}(X,\R)$ be a real $(1,1)$-cohomology class.
Recall that $\a$ is said to be \emph{pseudo-effective} (\emph{psef} for short) if it can be represented by a closed positive $(1,1)$-current $T$.
Given a smooth representative $\te$ of the class $\a$, it follows from $\partial\bar{\partial}$-lemma that any positive $(1,1)$-current can be written as $T=\te+dd^c \f$ where the global potential $\f$ is a $\te$-plurisubharmonic ($\te$-psh for short) function, i.e. $\te+dd^c\f\geq 0$. Here, $d$ and $d^c$ are real differential operators defined as
$$d:=\partial +\bar{\partial},\qquad d^c:=\frac{i}{2\pi}\left(\bar{\partial}-\partial \right).$$
The set of all psef classes forms a closed convex cone and its interior is by definition the set of all \emph{big} cohomology classes.\\
\indent We say that the cohomology class $\a$ is \emph{big} if it can be represented by a \emph{K{\"a}hler current}, i.e. if there exists a positive closed $(1,1)$-current $T_+\in\a $ that dominates some (small) K{\"a}hler form. By Demailly's regularization theorem \cite{Dem92} one can assume that $T_+:=\te+dd^c \f_+$ has \emph{analytic singularities}, namely there exists $c>0$ such that (locally on $X$),
$$
\f_+=\frac{c}{2}\log\sum_{j=1}^{N}|f_j|^2+u,
$$
where $u$ is smooth and $f_1,...f_N$ are local holomorphic functions.

\begin{defi}
{\it If $\a$ is a big class, we define its \emph{ample locus} $\Amp(\a)$ as the set of points $x\in X$ such that there exists a strictly positive current $T\in\a$ with analytic singularities and smooth around $x$.}
\end{defi}

Note that the ample locus $\Amp(\a)$ is a Zariski open subset by definition, and it is nonempty since $T_+$ is smooth on a Zariski open subset of $X$.

If $T$ and $T'$ are two closed positive currents on $X$, then $T$ is said to be \emph{more singular} than $T'$ if their local potentials satisfy $\f\le\f'+O(1)$.\\
\indent A positive current $T$ is said to have \emph{minimal singularities} (inside its cohomology class  $\a$) if it is less singular than any other positive current in $\a$. Its $\theta$-psh potentials $\f$ will correspondingly be said to have minimal singularities.\\
\indent Note that any $\theta$-psh function $\f$ with minimal singularities is locally bounded on the ample locus $\Amp(\a)$ since it has to satisfy $\f_+\le \f + O(1)$.
Furthermore, such $\theta$-psh functions with minimal singularities always exist, one can consider for example
$$V_\theta:=\sup\left\{ \f\,\,\theta\text{-psh}, \f\le 0\text{ on } X \right \}.$$

We now introduce the \emph{volume} of the cohomology class $\a\in H_{big}^{1,1}(X,\R)$:
\setcounter{tocdepth}{1}
\begin{defi}
\it{Let $T_{\min}$ a current with minimal singularities in $\a$  and let $\Omega$ a Zariski open set on which the potentials of $T_{\min}$ are locally bounded, then 
\begin{equation}\label{v}
\vol(\a):=\int_{\Omega} T_{\min}^n>0
\end{equation}
is called the volume of $\a$.}
\end{defi}
\indent Note that the Monge-Amp{\`e}re measure of $T_{\min}$
is well defined in $\Omega$ by \cite{bt} and that the volume is independent of the choice of $T_{\min}$ and $\Omega$ (\cite[Theorem 1.16]{begz}).
 
Given $T_1,...,T_p$ closed positive $(1,1)$-currents, it has been shown in \cite{begz} that the (multilinear) \emph{non-pluripolar product} 
$$\langle T_1\wedge...\wedge T_p\rangle$$
is a well defined closed positive $(p,p)$-current that does not charge pluripolar sets. In particular, given $\f_1,...,\f_n$ $\te$-psh functions, we define their Monge-Amp\`ere measure as
$$\MA(\f_1,...\f_n):= \langle (\theta+dd^c\f_1)\wedge...\wedge (\theta+dd^c\f_n)\rangle.$$
By construction the latter is a non-pluripolar measure and satisfies $$\int_X MA(\f_1,...\f_n)\leq \vol(\{\te\}).$$
In the case $\f_1=...=\f_n=\f$ we simply set $$\MA(\f)=MA(\f,...\f).$$ 
By definition of the volume of $\{\theta\}$ and the fact that the non-pluripolar product does not charge pluripolar sets, it is then clear that for any $T_{\min}=\theta+dd^c \f_{\min}\in \{\te\}$ current with minimal singularities, one has $$\int_X \MA(\f_{\min})=\int_X\langle T_{\min}^n\rangle=\vol(\{\te\}).$$

\subsection{Finite (weighted) energy classes}
Let $\a\in H^{1,1}(X,\R)$ be a big class and $\theta\in \a$ be a smooth representative.

\begin{defi}\label{defi:fullMA} 
{\it A closed positive $(1,1)$-current $T$ on $X$ with cohomology class $\a$ is said to have \emph{full Monge-Amp{\`e}re mass} if
$$\int_X\langle T^n\rangle=\vol(\a).$$
We denote by $\cE(X,\a)$ the set of such currents.} 
{\it If $\f$ is a $\theta$-psh function such that $T=\theta+dd^c\f$, we will say that $\f$ has \emph{full Monge-Amp{\`e}re mass} if $\theta+dd^c\f$ has full Monge-Amp{\`e}re mass. We denote by $\cE(X,\theta)$ the set of corresponding functions.}
\end{defi}

Currents with full Monge-Amp{\`e}re mass have mild singularities, in particular they have zero Lelong number at every point $x\in \Amp(\a)$ (see \cite[Proposition 1.9]{dn}).\\

\begin{defi} 
{\it 
We define the energy of a $\theta$-psh function $\f$ as 
$$
E_{\theta}(\f):=\frac{1}{n+1}\sum_{j=0}^n\int_X-(\f-V_{\theta})\langle T^j\wedge \theta_{\min}^{n-j}\rangle \;\in\;]-\infty,+\infty]
$$
with $T=\theta+dd^c\f$ and $\theta_{\min}=\theta+dd^cV_{\theta}$. We set
$$\cE^1(X,\theta):= \{\f\in \cE(X,\theta) \;\,  | \;\, E_{\theta}(\f)<+\infty \}.$$
We denote by $\cE^1(X,\a)$ the set of positive currents in the class $\a$ whose global potentials have finite energy. }
\end{defi} 

The energy functional is non-increasing and for an arbitrary $\theta$-psh function $\f$, 
 $$
 E_{\theta}(\f):=\sup_{\psi\ge\f} E_{\theta}(\psi)\in]-\infty,+\infty]
 $$
over all $\psi\ge\f$ with minimal singularities (see \cite[Proposition 2.8]{begz}). 

\section{Examples of Finite Energy Measures}\label{FiniteE}

Let $X$ be a compact K{\"a}hler manifold of complex dimension $n$ and $\alpha$ be a big class and $\te\in \a$ be a smooth representative. The following notion has been introduced in \cite{bbgz}:

\begin{defi}\label{fe}
A probability measure $\mu$ on $X$ has finite energy in $\a$ iff there exists $T\in \cE^1(X,\alpha)$ such that
\begin{equation}\label{eq1}
\mu= \frac{\langle T^n \rangle}{\vol(\alpha)}.
\end{equation}
 In this case we write $\mu\in \MA(\cE^1(X,\a))$.
\end{defi}
The purpose of this note is to study the set $\MA(\cE^1(X,\a))$ of finite energy measures.
\subsection{Some Criteria}
Let us recall that a probability measure $\mu$ having finite energy is necessarily non-pluripolar (see \cite[Lemma 4.4]{bbgz}). \\
\noindent When $(X,\omega)$ is a compact Riemann surface ( $n=1$) then $\mu=\omega+dd^c\f \in \MA(\cE^1(X,\{\omega\}))$ iff $\f$ belongs to the Sobolev space $W^{1,2}(X)$. This follows from Stokes theorem since 
$$\int_X (-\f)d\mu=\int_X (-\f)\omega+\int_X d\f\wedge d^c \f.$$


We recall that a probability measure $\mu$ has finite energy iff for any $\psi\in \cE^1(X,\te)$
$$\int_X -(\psi-V_\te) d\mu< +\infty,$$
where $V_\te$ is the $\te$-psh function with minimal singularities defined in Section \ref{pre} (see \cite[Lemma 4.4]{bbgz}). In particular, this insures that the set of measures with finite energy in a given cohomology class is convex, since given $\mu_1,\mu_2\in \MA(\cE^1(X,\{\te\}))$, then clearly for any $t\in[0,1]$,
$$\int_X-(\psi-V_\te) \,\left(td\mu_1+(1-t)d\mu_2\right)<+\infty.$$

\indent Let $\mu,\nu$ be two probability measures such that $\mu\lesssim \nu$. An immediate consequence of the above characterization is that $\mu$ has finite energy in $\a$ if so does $\nu$.\\

A technical criterion to insure that a given probability measure has finite energy is the following:
\begin{pro}\label{cap}
Let $\omega\in \a$ be a K{\"a}hler form and $\psi\in \cE^1(X,\omega/2)$. Assume there exists a constant $A>0$ such that $$\mu \leq A\, \Capis ^{1+\varepsilon}$$
for some $\varepsilon>0$. Then $\mu$ has finite energy in $\a$.
\end{pro}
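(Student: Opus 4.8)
The plan is to bound the energy integral $\int_X -(\psi'-V_\omega)\, d\mu$ for an arbitrary test function $\psi'\in\cE^1(X,\omega)$, using the hypothesis $\mu\leq A\,\Capis^{1+\varepsilon}$. First I would recall the standard machinery relating integrals against a measure to the capacity $\Capis$ of sublevel sets: writing $\{\psi'-V_\omega<-t\}$ for the relevant sublevel sets, one has the layer-cake formula
$$\int_X -(\psi'-V_\omega)\, d\mu = \int_0^{+\infty} \mu\bigl(\{\psi'-V_\omega<-t\}\bigr)\, dt,$$
so it suffices to show that $t\mapsto \mu(\{\psi'-V_\omega<-t\})$ is integrable on $[0,+\infty)$. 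By the hypothesis this is controlled by $A\,\Capis(\{\psi'-V_\omega<-t\})^{1+\varepsilon}$.

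Next I would invoke the quantitative capacity estimate for functions in $\cE^1$ (the analogue of the Błocki/Guedj--Zeriahi type bound, in the form used in \cite{DL1, DL2}): since $\psi\in\cE^1(X,\omega/2)$, the capacity $\Capis$ dominates the usual Monge--Ampère capacity $\Capo$ up to constants on the relevant sets, and for $\psi'\in\cE^1(X,\omega)$ one has a decay estimate of the shape $\Capo(\{\psi'-V_\omega<-t\})\leq C/t^{n}$ or, more precisely via the energy, $\int_0^\infty t^{n}\,\Capo(\{\psi'-V_\omega<-t\})\,\tfrac{dt}{t}<\infty$ together with $\Capo(\{\psi'-V_\omega<-t\})\to 0$. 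Combining, $\mu(\{\psi'-V_\omega<-t\})\leq A\,C^{1+\varepsilon} t^{-n(1+\varepsilon)}$ for $t$ large, which is integrable because $n(1+\varepsilon)>1$; near $t=0$ the integrand is bounded since $\mu$ is a probability measure. Hence the energy integral is finite for every $\psi'\in\cE^1(X,\omega)$, and by the characterization recalled before the statement (\cite[Lemma 4.4]{bbgz}) this means $\mu\in\MA(\cE^1(X,\{\omega\}))=\MA(\cE^1(X,\alpha))$.

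The main obstacle I anticipate is making the comparison between $\Capis$ and $\Capo$ (and the ensuing decay estimate) precise: $\Capis$ is a \emph{generalized} capacity defined relative to the singular weight $\psi\in\cE^1(X,\omega/2)$, so one must verify that the sublevel sets of a function with full mass relative to $\omega$ interact correctly with this weighted capacity, and that the finiteness of $E_\omega(\psi')$ and $E_{\omega/2}(\psi)$ together furnish the required integrable majorant. This is exactly the point where the techniques of \cite{DL1, DL2} enter, and I would structure the argument so that the weighted-capacity decay lemma is isolated and cited, after which the layer-cake computation above is routine. A secondary technical point is reducing from arbitrary $\psi'\in\cE^1(X,\omega)$ to $\psi'$ with minimal singularities (or bounded) via the monotonicity and supremum characterization of $E_\theta$ recalled after the definition of $\cE^1$, so that all integrations by parts are legitimate; this is standard and I would dispatch it quickly.
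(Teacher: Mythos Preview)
Your strategy---layer--cake plus a capacity decay estimate plus the dual characterization of $\MA(\cE^1)$---is different from the paper's and can be made to work, but two of your intermediate claims are off. First, the comparison ``$\Capis$ dominates $\Capo$ up to constants'' is neither true in general nor needed: the competitors $u$ for $\Capis$ satisfy $\psi-1\le u\le\psi$ and are therefore \emph{unbounded} when $\psi$ is, so there is no uniform comparison with $\Capo$. Second, the decay $\Capo(\{\psi'<-t\})\le C/t^{n}$ is not the estimate that drives the argument. What actually works is to observe that every competitor $u$ lies above $\psi-1\in\cE^1(X,\omega)$ (since $\psi\in\cE^1(X,\omega/2)\subset\cE^1(X,\omega)$), hence has uniformly bounded energy; the energy--pairing inequality then gives $\int_X(-\psi')\,\MA(u)\le C$ uniformly in $u$ for each fixed $\psi'\in\cE^1(X,\omega)$, and Chebyshev yields $\Capis(\{\psi'<-t\})\le C/t$. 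Since $(C/t)^{1+\varepsilon}$ is integrable near infinity and $\mu$ is a probability measure near $t=0$, the layer--cake integral converges and you conclude via \cite[Lemma~4.4]{bbgz} as you intended. So your outline is salvageable once you replace the $\Capo$--comparison by this direct energy bound on the competitors.

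The paper proceeds quite differently and obtains more. It first solves $\mu=(\omega+dd^c\varphi)^n$ with $\varphi\in\cE(X,\omega)$ (by \cite{gz}), sets $H(t)=\Capis(\{\varphi<\psi-t\})^{1/n}$, and combines a comparison inequality from \cite{DL1} with the hypothesis $\mu\le A\,\Capis^{1+\varepsilon}$ to derive the functional inequality $s\,H(t+s)\le A^{1/n}H(t)^{1+\varepsilon}$ for $s\in[0,1]$. The De~Giorgi--type iteration lemma of \cite{EGZ09} then forces $H(t_0)=0$ for some finite $t_0$, i.e.\ $\varphi\ge\psi-C$ pointwise. Since $\cE^1$ is stable under $\max$, this gives $\varphi\in\cE^1(X,\omega)$. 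Thus the paper's route yields a genuine a~priori lower bound on the potential, whereas your dual approach only certifies finiteness of the energy integrals; on the other hand, your argument avoids invoking the solvability result from \cite{gz} and the iteration lemma.
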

\noindent Here, by $\Capis$ we mean the generalized Monge-Amp\`ere capacity introduced and studied in \cite{DL1,DL2}, namely for any Borel set $E\subset X$,
$$\Capis(E):=\sup \left\{\int_E \MA(u)\,\quad |\quad  u\in\psh(X,\omega),\,\; \psi-1\leq u \leq \psi \right\}.$$

\begin{proof}
Such result follows directly from the arguments in \cite[Theorem 3.1]{DL1}.
Recall that by \cite{gz} there exists a unique (up to constant) $\f\in \cE(X,\omega)$ such that $$\mu=  (\omega+dd^c \f)^n.$$ Set
$$
H(t)=\left[\Capis(\{\varphi<\psi-t\})\right]^{1/n},\ t>0.
$$
Using \cite[Proposition 2.8]{DL1} and the assumption on the measure $\MA(\f)$, we get
$$ 
s H(t+s)\leq A^{1/n} H(t)^{1+\varepsilon},  \ \forall t>0, \forall s\in [0,1]. 
$$
Then by \cite[Lemma 2.4]{EGZ09} we get $\varphi \geq \psi-C$,
where $C$ only depends on $A$. Hence $\f\in \cE^1(X,\omega)$ since the class $\cE^1(X,\omega)$ is stable under the max operation \cite[Corollary 2.7]{gz}.
\end{proof}

\subsection{Measures with densities}  

Let $\a$ be a K{\"a}hler class and $\omega\in \a$ be a K{\"a}hler form. We consider probability measures of the type $\mu=f\omega^n$ with density $0<f\in L^1(X)$. We investigate under which assumptions on the density $f$, the measure $\mu$ has finite energy. We recall that by \cite{gz} there exists a unique (up to constant) $\omega$-psh function $\f\in \cE(X,\omega)$ solving
\begin{equation}\label{eq}
(\omega+dd^c \f)^n=f\omega^n.
\end{equation} 
When $f \in L^p(X)$ for some $p>1$, it follows from the work of Ko{\l}odziej \cite{Kol98}
that the solution of (\ref{eq}) is actually uniformly bounded (and even H\"older continuous) on the whole of $X$. In particular, $\f\in \cE^1(X,\omega)$ that means $\mu\in \MA(\cE^1(X,\{\omega\}))$. In the following we consider concrete cases when the density $f$ is merely in $L^1(X)$.\\

If the density has \emph{finite entropy}, i.e. $\int_X f\log f < +\infty$, then the measure has finite energy (see \cite[Lemma 2.18]{bbgz12}). As we will see in the sequel this condition is still too strong and it is not necessary.

\subsection{Radial measures}\label{smoothWeights}
We consider here radially invariant measures. For simplicity we work in the local case but the same type of computations can be done in the compact setting. Let $\chi:\R\rightarrow \R$ a convex increasing function such that $\chi'(-\infty)=0$ and $\chi(t)=t$ for $t>0$. Denote by $\|z\|=\sqrt{|z_1|^2+...+|z_2|^2}$ the Euclidean norm of $\C^n$. Consider $$\f(z)=\chi\circ \log\|z\|.$$ Then $\f$ is plurisubharmonic in $\mathbb{B}(0, r)\subset \C^n$ with $r>0$ small, and  $$\mu:=(dd^c \f)^n.$$
Observe that, giving a radial measure in $\mathbb{B}(0, r)$ is the same thing as giving a positive measure $\nu$ in the interval $(0,r]$. This means that $\mu$ has finite energy if and only if
$$\int_0^r |\chi(\log \rho)| d\nu(\rho)<\infty.$$

\subsubsection*{Smooth weights} Let $\chi:\R\rightarrow \R$ a smooth convex increasing function such that $\chi'(-\infty)=0$. Denote by $\|z\|=\sqrt{|z_1|^2+...+|z_2|^2}$ the Euclidean norm of $\C^n$. Consider $$\f(z)=\chi\circ \log\|z\|.$$ Then $\f$ is plurisubharmonic in $\mathbb{B}(0, r)\subset \C^n$ with $r>0$ small, and 
$$\mu:=(dd^c \f)^n= f dV, \quad{\rm{with}}\quad f(z)=\frac{c_n (\chi' \circ \log \|z \|)^{n-1} \chi''(\log\|z \|) }{\|z \|^{2n}}   $$
where $dV$ denotes the Euclidean measure on $\C^n$. It turns out that $\mu$ has finite energy iff 
$$\int_{\mathbb{B}(0, r)} -\chi\circ \log\|z\| f(z) dV <+\infty,$$
that, using polar coordinates, is equivalent to 
\begin{equation}\label{cond}
\int_{-\infty}^{\log r} -\chi(s) (\chi'(s))^{n-1} \chi''(s) ds < +\infty. 
\end{equation}
\begin{ex} Consider $\chi_p(t)=-(-t)^p$ with $0<p<1$. Then the associated radial measure $\mu_p$ has finite energy iff $p<\frac{n}{n+1}$. 
\end{ex}

In \cite[Corollary 4.4]{ddghz}, the authors have proven that the range of $\MAH(X,\omega)$, the Monge Amp\`ere operator of plurisubsharmonic H\"older continuous functions, has the $L^p$ property:  if $\mu\in \MAH(X,\omega) $ and $0 \leq g \in L^p(\mu)$ for some $ p > 1$ with $\int_X g d\mu = \int_X\omega^n $, then $g\mu \in \MAH(X, \omega)$. One can wonder whether $\MA(\cE^1(X,\omega))$, i.e. the set of finite energy measures, satisfies such a property. This is not the case as the following example shows.
\begin{ex} Let $n>1$ and $\mu=f\omega^n=(\omega+dd^c\chi\circ \log \|z\|)^n$ where $\chi(t):=-(-t)^{\frac{n-1}{n+1}}$. Then $\mu\in \MA(\cE^1(X,\omega))$. We now consider $g(z)= (-\log\|z\|)^{n/(n+1)}$ and observe that $g\in L^{\frac{n+1}{n}}{(\mu)}$. But then $g \mu \notin \MA(\cE^1(X,\omega))$ since one can check that $$g\mu \sim (\omega+dd^c \chi_1 \circ \log\|z\|)^n,$$ where $\chi_1(t)=-(-t)^{n/(n+1)}$ and then the integral in (\ref{cond}) is not finite.
\end{ex}

\subsection{Toric measures}
Let $T$ be the torus $(\mathbb{S}^1)^n$ in $\C^n$ of real dimension $n$ and denote by $T_c=(\C^*)^n$ its compactification. Recall that a $n$-dimensional compact K\"ahler toric manifold $(X,\omega, T_c)$ is an equivariant compactification of the torus $T_c$ equipped with a $T$-invariant K\"ahler metric $\omega$ which writes
$$\omega=dd^c \psi \quad \rm{in} \; (\C^*)^n$$
where $\psi$ is a psh $T$-invariant function, hence $\psi(z)= \phi_P \circ L(z)$ with
$$L:z\in (\C^*)^n\rightarrow (\log|z_1|,\cdots, \log|z_n|)\in \R^n$$
and $\phi_P: \R^n \rightarrow \R$ a strictly convex function.\\
\indent In the same way, given a $\omega$-psh \emph{toric} potential $\varphi$ on $X$ (i.e. a $\omega$-psh function that is $T$-invariant), we can consider the corresponding convex function $\phi$ in $\R^n$ such that
$$\phi\circ L= \phi_P\circ L+ \varphi \quad \rm{in}\; (\C^*)^n.$$

Recall that a famous result of Atiyah-Guillemin-Sternberg claims that the moment map $\nabla \phi_P: \R^n\rightarrow \R^n$ sends $\R^n$ to a bounded convex polytope $P$, which is independent of $\phi_P$.\\

\indent The \emph{Legendre transform} of a convex function $\phi(x)$ is defined by
$$\phi^*(p):= \sup_{x\in \R^n} \{\langle x,p\rangle -\phi(x)\}$$
which is a convex function in $\R^n$ with values in $(-\infty, +\infty ].$

Now, let $\phi_P^*$ denote the Legendre transform of $\phi_P$. Observe that $\phi_P^*=+\infty$ in $\R^n\setminus P$ and for $p\in \rm{int}(P)$, 
$$\phi^*_P(p)=\langle x, p\rangle-\phi_P(x)\quad  {\rm{with}} \quad \nabla \phi_P (x)=p  \Leftrightarrow \nabla \phi^*_P(p)=x .$$
In \cite{g}, the author shows that, given a $\omega$-psh toric potential $\f$, we can read off the singular behavior of $\varphi$ from the integrability properties of the Legendre transform of its associates convex function. More precisely, he proves that
$$\f\in \cE^q_{toric}(X, \omega) \Longleftrightarrow \phi^* \in L^q(P, dp)$$
for any $q>0$ (see also \cite{bb13} for a proof in the case $q=1$), where $\cE^q_{toric}(X, \omega)$ is the set of $T$-invariant $\omega$-psh functions that belong to the energy class $\cE^q(X, \omega)$.\\

Note that to any non-pluripolar $T$-invariant measure $\mu$ on $X$ of total mass $\int_X\omega^n$, we can associate a measure $\tilde{\mu}$ on $\R^n$ of total mass $\vol(P)$. Indeed, by \cite{gz} there exists a unique (up to constant) $\f\in \cE_{toric}(X,\omega)$ such that $\mu=\MA(\f)$ and then $\tilde{\mu}=\MA_{\R} (\phi)$ where $\MA_{\R}$ denotes the real Monge-Amp\`ere measure of the convex function $\phi$ associated to $\f$.\\

In the result below we establish some regularity of the potential $\f$ in terms of a moment condition on $\tilde{\mu}$.
\begin{pro}\label{toric}
Let $n>1$. Assume $\int_{\R^n} |x|^q d\tilde{\mu} <+\infty $ for some $1\leq q< n.$ Then $\f\in \cE^{q^*}_{toric}(X,\omega)$ with $q^*=\frac{qn}{n-q}$. In particular, this implies that $\mu$ has finite energy.
\end{pro}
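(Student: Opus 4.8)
The plan is to pass, via the Legendre transform, to a Sobolev embedding for convex functions on the bounded polytope $P$. Let $\phi$ be the convex function on $\R^n$ associated with $\varphi$ (so $\phi\circ L=\phi_P\circ L+\varphi$), and recall $\tilde\mu=\MA_\R(\phi)$, of total mass $\vol(P)=|P|$. The first step is the change of variables formula for the real Monge--Amp\`ere operator: since $\nabla\phi^*$ is, a.e.\ on $\mathrm{int}(P)$, the inverse of the subdifferential map of $\phi$, one has
\[
\int_{\R^n} g\, d\MA_\R(\phi)=\int_P g\big(\nabla\phi^*(p)\big)\, dp
\]
for every Borel $g\ge 0$ on $\R^n$ (this is classical: check it on indicator sets and use the a.e.\ differentiability of $\phi^*$ on $\mathrm{int}(P)$; cf.\ \cite{g}). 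Taking $g(x)=|x|^q$, the hypothesis $\int_{\R^n}|x|^q\, d\tilde\mu<+\infty$ becomes precisely $\nabla\phi^*\in L^q(P,dp)$.

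I would then upgrade this to $\phi^*\in W^{1,q}(\mathrm{int}\,P)$ using convexity to supply the missing zeroth-order bound. On the one hand $\phi^*(p)=\sup_x(\langle x,p\rangle-\phi(x))\ge -\phi(0)$, so $\phi^*$ is bounded below. On the other hand, fixing a ball $B\subset\subset\mathrm{int}(P)$, for a.e.\ $p\in\mathrm{int}(P)$ and any $p_0\in B$ the subgradient inequality for $\phi^*$ gives
\[
\phi^*(p)\le \phi^*(p_0)+\langle\nabla\phi^*(p),\,p-p_0\rangle\le \sup_B\phi^*+\mathrm{diam}(P)\,|\nabla\phi^*(p)|.
\]
As $P$ is bounded and $\nabla\phi^*\in L^q(P)\subseteq L^1(P)$, this yields $\phi^*\in L^q(P)$; since a convex function is locally Lipschitz on the open set $\mathrm{int}(P)$, its distributional gradient agrees with its a.e.\ gradient, so $\phi^*\in W^{1,q}(\mathrm{int}\,P)$. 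Now $\mathrm{int}(P)$ is a bounded convex, hence Lipschitz, domain and $1\le q<n$, so the Sobolev embedding $W^{1,q}(\mathrm{int}\,P)\hookrightarrow L^{q^*}(\mathrm{int}\,P)$ with $q^*=\frac{qn}{n-q}$ gives $\phi^*\in L^{q^*}(P,dp)$.

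To conclude, Guedj's characterization \cite{g} states that $\varphi\in\cE^{s}_{toric}(X,\omega)\Longleftrightarrow\phi^*\in L^{s}(P,dp)$ for every $s>0$; applied with $s=q^*$ it gives $\varphi\in\cE^{q^*}_{toric}(X,\omega)$, which is the assertion. Moreover $q^*=\frac{qn}{n-q}\ge q\ge 1$ and $|P|<+\infty$, so $\phi^*\in L^{q^*}(P)\subseteq L^1(P)$, and the characterization applied with $s=1$ gives $\varphi\in\cE^{1}_{toric}(X,\omega)\subseteq\cE^1(X,\omega)$; hence $\mu=\MA(\varphi)$ has finite energy.

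I expect the only genuinely delicate point to be the rigorous justification of the change of variables formula when $\phi$ is merely convex rather than smooth and strictly convex --- i.e.\ that $\nabla\phi^*$ inverts $\partial\phi$ up to null sets --- together with the small but easily overlooked verification that $\phi^*$ lies in $W^{1,q}$ of the \emph{entire} polytope (not merely locally), for which the convexity bound $|\phi^*|\le C(1+|\nabla\phi^*|)$ above is exactly what is needed. Everything else is a direct application of \cite{g} and of standard Sobolev theory on bounded Lipschitz domains.
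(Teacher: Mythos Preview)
Your argument is correct and follows the same route as the paper: Legendre duality turns the moment hypothesis into $\nabla\phi^*\in L^q(P)$, the Sobolev embedding on the bounded polytope yields $\phi^*\in L^{q^*}(P)$, and Guedj's characterization concludes. The one technical difference is in how the change of variables is justified. The paper approximates $\phi$ from above by a sequence $\phi_j$ of smooth strictly convex functions (so that $\nabla\phi_j$ is an honest diffeomorphism onto $\mathrm{int}(P)$), obtains $\|\phi_j^*\|_{L^{q^*}(P)}\le C\|\nabla\phi_j^*\|_{L^q(P)}$ for each $j$, and passes to the limit; this regularization sidesteps precisely the null-set issue you flag about inverting $\partial\phi$. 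Conversely, your direct route is more explicit about the Sobolev step: the paper writes the homogeneous inequality without comment, whereas you supply the convexity bound $\phi^*(p)\le \sup_B\phi^*+\mathrm{diam}(P)\,|\nabla\phi^*(p)|$ needed to place $\phi^*$ in $W^{1,q}$ of the full polytope and invoke the standard inhomogeneous embedding on a bounded Lipschitz domain. Each device buys exactly what the other leaves implicit.
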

\begin{proof}
Let $\phi_j$ a sequence of strictly convex smooth fucntions decreasing to $\phi.$
Making the change of variables $p=\nabla \phi_j (x)$, which by duality means that $x=\nabla\phi_j^*(p)$, gives $$\int_{\R^n} |x|^q \MA_{\R}(\phi_j)=\int_P |\nabla \phi_j^*|^q dp.$$
By Sobolev inequality there exists a uniform constant $C>0$ such that for any $j$,
$$\|\phi_j^*\|_{L^{q^*}(P)}\leq C \|\nabla \phi_j^*\|_{L^q(P)}.$$
Passing to the limit for $j\rightarrow +\infty$, we obtain $\phi^*\in L^{q^*}(P)$. The last statement simply follows from the fact that $q^*>q\geq 1.$
\end{proof}
We also refer the reader to \cite[Theorem 2.19]{bb13} for an alternative proof of the fact that when $\tilde{\mu}$ has finite first moment, i.e. $\int_{\R^n} |x| d\tilde{\mu}<+\infty$, then the measure has finite energy.\\
Asking the measure $\tilde{\mu}$ to have finite first moment is a sufficient condition for the measure to have finite energy but it is not necessary as the following example shows:
\begin{ex}
Assume $X=\C\mathbb{P}^1$ is the Riemann sphere and $\omega$ is the Fubini-Study K\"ahler form,
$$\phi_P(x)=\frac{1}{2} \log [1+e^{2x}]$$
and $P=[0,1]$. For any $\beta\in (0,1)$, consider the convex function
$$\phi_\beta(x)=\begin{cases} x-1 &\mbox{if} \;x>0 \\
                         1  &\mbox{if} \;x=0 \\
                         -C(-x)^\beta  &\mbox{if}\; x<0
\end{cases}
 $$
 Then $\phi^*_\beta (p)= p^{-\beta^*},$ where $\beta^*=\beta/(1-\beta)>0$. In this case the toric measure $\mu=\MA(\f_\beta)$ on $X$ associated to the measure $\MA_{\R}(\phi_\beta)$ on $\R^n$, has finite energy as soon as $\beta^*<1$, or equivalently $\beta<1/2$, although the first moment condition is never satisfied. In this example, the measure has finite energy if and only if the moment condition of order $1/2$ is finite.\\
 \begin{rem}
We expect, more generally, that the moment condition of order $n/(n+1)$ is a necessary and sufficient condition to insure that a given toric measure has finite energy.
\end{rem}
\end{ex} 
\subsection{Divisorial singularities}

Let $D= \sum_{j=1}^{N} D_j$ be a simple normal crossing divisor on $X$. Here "simple normal crossing" means that around each intersection point of $k$ components $D_{j_1},...,D_{j_k}$ ($k\leq N$), we can find complex coordinates $z_1,...,z_n$ such that for each $l=1,...,k$ the hypersurface $D_{j_l}$ is locally given by $z_l=0$. For each $j$, let $L_j$ be the holomorphic line bundle defined by $D_j$. Let $s_j$ be a holomorphic section of $L_j$ defining $D_j$, i.e 
$D_j=\{s_j=0\}$. We fix  a hermitian metric $h_j$ on $L_j$ such that 
$\vert s_j\vert:= \vert s_j\vert_{h_j}\leq 1/e$. 

We say that $f$ satisfies Condition $C(B,\alpha)$ for some $B>0$, $\alpha>0$ if
\begin{equation}\label{eq: cond f log bis}
f= \frac{h}{\prod_{j=1}^N \vert s_j\vert^2 (-\log \vert s_j\vert)^{1+\alpha}}.
\end{equation}
where $h\in C^{\infty}(X)$, $1/B\leq h\leq B$.
\begin{pro}\label{divisorial}
Assume that $f$ satisfies $C(B,\alpha)$ for some $B>0$, $\a>0$.Then
$$\mu\in \MA(\cE^1(X,\{\omega\})) \quad \mbox{if and only if}\quad \alpha>1/2. $$
\end{pro}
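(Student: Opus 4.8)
The plan is to separate the two implications and, in both directions, reduce to a model radial computation near the divisor $D$ of the type already carried out in Section~\ref{smoothWeights}, combined with the capacity criterion of Proposition~\ref{cap}. First I would fix, near a point lying on the intersection of components $D_{j_1},\dots,D_{j_k}$, local coordinates $z_1,\dots,z_n$ with $D_{j_l}=\{z_l=0\}$; up to comparable constants one has $|s_{j_l}|\sim |z_l|$ on a small polydisc, so $C(B,\alpha)$ says $f\sim \prod_{l=1}^k \bigl(|z_l|^2(-\log|z_l|)^{1+\alpha}\bigr)^{-1}$ times a bounded factor, and $f$ is bounded away from $D$. The natural candidate potential is $\varphi$ built from the one–variable model $u_\alpha(\zeta)=-\bigl(-\log|\zeta|\bigr)^{\beta}$ with $\beta=\frac{1-\alpha}{\ ?\ }$ chosen so that, in one complex variable, $dd^c u_\alpha \sim |\zeta|^{-2}(-\log|\zeta|)^{\beta-2}\,dV$; matching the exponent $1+\alpha$ forces $\beta=1-\alpha$ when $\alpha<1$, while for $\alpha\ge 1$ the density is already in $L^p$ for some $p>1$ near $D$ (since $(-\log|z|)^{-1-\alpha}$ is locally $L^p$ there once $1+\alpha$ compensates the $|z|^{-2}$ only logarithmically) — wait, more carefully: $|z|^{-2}(-\log|z|)^{-1-\alpha}$ is never in $L^p$ for $p>1$, so one genuinely needs the energy analysis for all $\alpha$. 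I would therefore, in the tensor/product situation, take $\varphi = \sum_{l} u_\alpha\circ(\text{local coordinate}) $ glued to a global $\omega$–psh function using a partition of unity and the standard fact that $\omega+dd^c(\text{global model})\ge 0$ after rescaling; then $(\omega+dd^c\varphi)^n$ is comparable to $f\,dV$ near $D$ by the product structure of both sides (the mixed terms in the Monge–Ampère expansion are lower order), and bounded elsewhere. By the uniqueness in \cite{gz} the actual solution $\varphi$ of $(\omega+dd^c\varphi)^n=f\omega^n$ has the same singularity type, so it suffices to test finiteness of the energy on this explicit model.

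For the direction $\alpha>1/2\Rightarrow \mu\in\MA(\cE^1)$, I would verify $\varphi\in\cE^1(X,\omega)$ by computing $\int_X -(\varphi-V_\omega)\,MA(\varphi)$ directly in polar coordinates. Near $D$ this reduces, after the product decomposition, to a finite sum of one–dimensional integrals of the shape $\int_0 (-\log\rho)^{\beta}\cdot \rho^{-1}(-\log\rho)^{\beta-2}\,d\rho \sim \int^{+\infty} t^{\,2\beta-2}\,dt$ (substituting $t=-\log\rho$), together with cross terms whose integrands are dominated by the same power; this converges exactly when $2\beta-2<-1$, i.e. $\beta<1/2$, i.e. $1-\alpha<1/2$, i.e. $\alpha>1/2$ — which is the asserted threshold. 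I would phrase the clean version of this via Proposition~\ref{cap}: show $\mu\le A\,\mathrm{Cap}_\psi^{1+\varepsilon}$ for a suitable $\psi\in\cE^1(X,\omega/2)$ obtained from the same model with a slightly worse exponent, so that the finite–energy conclusion follows without re-deriving the bounded-ness step by hand. The capacity estimate itself is again a local radial computation: $\mathrm{Cap}_\psi(\{|z_l|<r\})$ decays like a power of $(-\log r)^{-1}$, and comparing with the mass $\mu(\{|z_l|<r\})\sim (-\log r)^{-\alpha}$ gives the exponent $1+\varepsilon$ precisely in the range $\alpha>1/2$.

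For the converse, $\alpha\le 1/2\Rightarrow \mu\notin\MA(\cE^1)$, I would argue by contradiction using the characterization recalled after Definition~\ref{fe}: if $\mu$ had finite energy then $\int_X-(\psi-V_\omega)\,d\mu<\infty$ for every $\psi\in\cE^1(X,\omega)$. Take $\psi$ to be (a global regularization of) the model potential $-\bigl(-\log|z_l|\bigr)^{\beta'}$ with $\beta'$ chosen in $(1/2,1)$ so that $\psi\in\cE^1$ (possible precisely because the energy integral for the model converges iff the exponent is $<1/2$... — here one uses the one–variable energy computation in the other direction, picking $\beta'<1/2$ so $\psi$ has finite energy, yet $\beta'$ as close to $1/2$ as needed). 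Pairing this $\psi$ against $d\mu\sim \prod_l|z_l|^{-2}(-\log|z_l|)^{-1-\alpha}\,dV$ and integrating in polar coordinates produces $\int^{+\infty} t^{\beta'-1-\alpha}\,dt$ in each variable, which diverges once $\beta'-1-\alpha\ge -1$, i.e. $\beta'\ge\alpha$; since $\alpha\le 1/2$ we may indeed choose such a $\beta'<1/2$ with $\beta'\ge\alpha$, giving a finite–energy test function against which $\mu$ pairs to $+\infty$ — contradiction.

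The main obstacle I expect is not the one–variable asymptotics (those are routine substitutions $t=-\log\rho$) but the \emph{globalization and the tensor structure}: making rigorous that the solution $\varphi$ on the compact manifold really has, near each stratum of $D$, the precise singularity $\sum_l u_\alpha(z_l)$ up to bounded error — in particular controlling the mixed Monge–Ampère terms $\langle(\omega+dd^c u_\alpha(z_{l_1}))\wedge\cdots\rangle$ across different components of $D$ and checking they do not contribute a worse divergence — and ensuring the partition-of-unity gluing does not spoil either the comparison $MA(\varphi)\sim f\omega^n$ or the membership $\psi\in\cE^1(X,\omega/2)$. The cleanest route is probably to avoid constructing $\varphi$ explicitly and instead run everything through the capacity estimate of Proposition~\ref{cap} for the forward direction and through the integral characterization for the converse, so that only \emph{upper and lower bounds} on $f$, $\mathrm{Cap}_\psi$, and the pairing $\int(-\psi)\,d\mu$ are needed, all of which localize and tensorize cleanly.
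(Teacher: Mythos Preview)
Your forward implication ($\alpha>1/2\Rightarrow\mu\in\MA(\cE^1)$) via the generalized capacity of Proposition~\ref{cap} is essentially the route the paper takes: the paper invokes \cite[Theorem~2]{DL1}, whose proof is precisely that capacity argument, to obtain a lower bound $\f\ge\sum_j -a_1(-\log|s_j|)^q-A_1$ for some $q\in(1-\alpha,1/2)$, and concludes since the right-hand side lies in $\cE^1(X,\omega)$. Your alternative of computing the energy integral directly on the model also works, and the globalization you flag as the ``main obstacle'' is not the real issue: the functions $-a(-\log|s_j|)^q$ are globally $\omega$-psh for small $a>0$, so no partition-of-unity gluing is needed.

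For the converse with $\alpha<1/2$ your test-function argument is a clean alternative to the paper's. The paper instead cites an \emph{upper} bound $\f\le\sum_j -a_2(-\log|s_j|)^p+A_2$ from \cite[Proposition~4.4]{DL1} with $p\in(1/2,1-\alpha)$, and uses monotonicity of the energy to conclude $\f\notin\cE^1$. Your approach --- pair $\mu$ against $\psi_{\beta'}=-a\sum_j(-\log|s_j|)^{\beta'}$ with $\beta'\in[\alpha,1/2)$ so that $\psi_{\beta'}\in\cE^1$ yet $\int(-\psi_{\beta'})\,d\mu=+\infty$ --- is more elementary and avoids invoking the pointwise estimate on the solution.

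There is, however, a genuine gap at the endpoint $\alpha=1/2$. Your divergence condition reads $\beta'\ge\alpha$, while membership $\psi_{\beta'}\in\cE^1$ requires $\beta'<1/2$; when $\alpha=1/2$ these are incompatible, and no choice of $\beta'$ in your family works (for any $\beta'<1/2$ one gets $\int_X(-\psi_{\beta'})\,d\mu\sim\int^\infty t^{\beta'-3/2}\,dt<+\infty$). The paper handles this critical case by a different comparison: with $u=-b\sum_j(-\log|s_j|)^{1/2}$ one checks $u\notin\cE^1(X,\omega)$ and $\MA(u)\le C\,f\omega^n$ pointwise; since domination by a finite-energy measure forces finite energy, if $\mu$ were in $\MA(\cE^1)$ then so would $\MA(u)$, whence $u\in\cE^1$ by uniqueness, a contradiction. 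To repair your argument at $\alpha=1/2$ you would need a test function with a logarithmic correction, e.g.\ of the form $-(-\log|s_j|)^{1/2}\bigl(\log(-\log|s_j|)\bigr)^{-\gamma}$ for a carefully chosen $\gamma$, and verify both its $\cE^1$-membership and the divergence of the pairing; this is more delicate and is not supplied in your proposal.
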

\begin{proof}
When $\a>1/2$, by \cite[Theorem 2]{DL1} we can find $q\in(1-\a, 1/2)$ such that 
$$\sum_{j=1}^N -a_1(-\log \vert s_j\vert)^q-A_1\leq \f,$$
where $a_1, A_1>0$ depends on $B,\alpha, q$. Note that the function $u_p=\sum_{j=1}^N -a_1(-\log \vert s_j\vert)^q$ if $\omega$-psh is $a_1>0$ is small enough and that $u_q\in \cE^1(X,\{\omega\})$, hence so does $\f$.\\
In the case $\a\in (0,1)$, by \cite[Proposition 4.4]{DL1} we get that for each $0<p<1-\a$ we have
$$
 \varphi \leq \sum_{j=1}^N -a_2(-\log \vert s_j\vert)^p +A_2,
$$
where $a_2, A_2>0$ depend on $B,\alpha,p$. Denote $u_p=\sum_{j=1}^N -a_2(-\log \vert s_j\vert)^p$. Observe that if $\a<1/2$, we can choose $p\in (1/2, 1-\a)$ such that $u_p\notin \cE^1(X, \omega)$. Thus $\f\notin \cE^1(X, \omega)$ and hence the conclusion. What is missing is the case $\alpha=1/2$. Consider $u=\sum_{j=1}^N -b(-\log \vert s_j\vert)^{1/2}$, where $b$ is a small constant such that $u\in \psh(X,\omega)$. Then $u\notin \cE^1(X, \omega)$ and we can find a constant $C>0$ such that 
$$\MA(u)\leq \frac{C}{B\prod_{j=1}^N \vert s_j\vert^2 (-\log \vert s_j\vert)^{3/2}},$$
 hence the conclusion.
\end{proof}
\begin{rem}
{\it Observe that in this case the \emph{entropy condition}, $\int_X f\log f<+\infty$, is satisfied only for $\a>1$ although the measure has finite measure as soon as $\a>1/2$.}
\end{rem}

\section{Stability Properties}\label{NonS}
Given $X, Y$ compact K{\"a}hler manifolds of complex dimension $n,m$, respectively,  with $m\leq n$ and $f: X\rightarrow Y$ a holomorphic map, it is the finite energy property is preserved under $f$. \\
It turns out that finite energy measures are invariant under biholomorphisms but not under bimeromorphisms as we explain in Sections \ref{I} and \ref{nonI}.\\
\indent In the following we wonder whether this notion depends or not on the cohomology class. In other words, given $\alpha,\beta$ big classes and a probability measure $\mu\in \MA(\cE^1(X,\a))$, we ask whether $\mu\in \MA(\cE^1(X,\b))$ or not. \\
We recall that by \cite[Theorem 3.1]{begz}, there exists a unique positive current $S\in \cE(X,\b)$ such that $$\mu=\frac{\langle S^n \rangle}{\vol(\beta)}.$$ Therefore the question reduces to asking whether $S\in \cE^1(X,\beta)$. It turns out that this is false in general (see Example \ref{nofin}). We obtain a positive answer under restrictive conditions on the cohomology classes, i.e. $\a,\b$ both  K{\"a}hler, as Proposition \ref{prok} shows.
\subsection{Invariance property}\label{I}
Finite energy measures are invariant under biholomorphisms. Let $f: X\rightarrow Y$ be a biholomorphic map (in particular $n=m$)

\begin{pro}\label{prok}
Let $\a,\b$ be K{\"a}hler classes and $\mu$ a probability measure. Then
$$\mu\in \MA(\cE^1(X,\a)) \Longleftrightarrow \mu\in \MA(\cE^1(X,\b)).$$
\end{pro}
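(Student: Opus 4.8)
The plan is to reduce the statement to a comparison of energy classes in two Kähler classes, exploiting that every Kähler class is ``sandwiched'' between positive multiples of any other Kähler class. Fix Kähler forms $\omega_\a\in\a$ and $\omega_\b\in\b$. Since both are Kähler, there is a constant $c>0$ with
$$
c^{-1}\,\omega_\b \;\leq\; \omega_\a \;\leq\; c\,\omega_\b
$$
as $(1,1)$-forms on the compact manifold $X$. By the characterization recalled before Proposition \ref{cap}, it suffices to show: if $\mu\in\MA(\cE^1(X,\a))$, then $\int_X -(\psi-V_{\omega_\b})\,d\mu<+\infty$ for every $\psi\in\cE^1(X,\omega_\b)$; and then conclude by symmetry (swapping the roles of $\a$ and $\b$). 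So the heart of the matter is to transfer the test function $\psi$ from $\cE^1(X,\omega_\b)$ into $\cE^1(X,\omega_\a)$, up to a bounded error, and to control $V_{\omega_\b}$ versus $V_{\omega_\a}$.

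First I would note that since $\a,\b$ are Kähler, $V_{\omega_\a}$ and $V_{\omega_\b}$ are bounded functions on $X$ (indeed one may take $\omega_\a,\omega_\b>0$ everywhere, so the zero function is a candidate and minimal-singularity potentials are bounded), so the terms $V_{\omega_\a}, V_{\omega_\b}$ contribute only bounded quantities and can be ignored; the finite-energy test integral is then equivalent to $\int_X -\psi\,d\mu<+\infty$ for $\psi\in\cE^1(X,\omega_\b)$ normalized by $\sup\psi=0$. Next, given such a $\psi$, the key observation is that $\frac{1}{c}\psi$ is $\omega_\a$-psh: from $\omega_\b+dd^c\psi\geq 0$ we get $\omega_\a + dd^c(\tfrac1c\psi)\geq c^{-1}\omega_\b - \omega_\a \cdot 0\ldots$ — more carefully, $\omega_\a+dd^c(\tfrac1c\psi) \geq \tfrac1c\omega_\b+\tfrac1c dd^c\psi = \tfrac1c(\omega_\b+dd^c\psi)\geq 0$ using $\omega_\a\geq \tfrac1c\omega_\b$. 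So $\tfrac1c\psi\in\psh(X,\omega_\a)$. Then I would check that $\tfrac1c\psi$ has finite energy in $\cE^1(X,\omega_\a)$: this should follow because the energy functionals for comparable Kähler forms are comparable, or more robustly because $\cE^1$ is characterized by $\int_X -\f\,\MA_{\omega_\a}(\f)<\infty$ together with full mass, and scaling plus the domination $\omega_\a\leq c\,\omega_\b$ lets one bound the $\omega_\a$-Monge--Ampère energy of $\tfrac1c\psi$ by a constant times the $\omega_\b$-energy of $\psi$. (Alternatively, one can invoke that $\cE^1$ is determined by a single Kähler class up to such scalings — this is where I would lean on \cite{begz} or \cite{gz}.)

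With $\tfrac1c\psi\in\cE^1(X,\omega_\a)$ in hand, the hypothesis $\mu\in\MA(\cE^1(X,\a))$ and the characterization of finite-energy measures give $\int_X -\tfrac1c\psi\,d\mu<+\infty$, hence $\int_X -\psi\,d\mu<+\infty$, which is exactly the finite-energy test in $\b$. By Definition \ref{fe} and \cite[Theorem 3.1]{begz} (existence of $S\in\cE(X,\b)$ with $\mu=\langle S^n\rangle/\vol(\b)$), finiteness of this integral forces $S\in\cE^1(X,\b)$, so $\mu\in\MA(\cE^1(X,\b))$. Exchanging $\a$ and $\b$ gives the reverse implication.

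\textbf{Main obstacle.} The one step that needs genuine care, rather than formal manipulation, is verifying that $\tfrac1c\psi$ actually lies in $\cE^1(X,\omega_\a)$ and not merely in $\psh(X,\omega_\a)$ — i.e. that it has \emph{full} Monge--Ampère mass \emph{and} finite energy after the change of reference form. The full-mass part is subtle because full mass is not obviously preserved under changing the Kähler form (a function can be $\omega_\a$-psh and $\omega_\b$-psh yet have different Monge--Ampère masses for the two forms); here one wants to use that $\psi$ has zero Lelong numbers and that $\cE(X,\omega)$ for a Kähler class can be characterized by comparison/capacity conditions insensitive to the choice of Kähler representative, or to argue directly via the monotonicity of energy and a decreasing approximation by bounded potentials. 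I expect this is where the bulk of the rigor lies, and the cleanest route is probably to reduce everything to the capacity estimate of Proposition \ref{cap} or to the stability of $\cE^1$ under $\max$ together with comparison of energies, rather than manipulating Monge--Ampère masses by hand.
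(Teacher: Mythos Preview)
Your approach is correct and essentially the same as the paper's: both reduce to the characterization $\mu\in\MA(\cE^1(X,\b))\Leftrightarrow\cE^1(X,\omega_\b)\subset L^1(\mu)$ (this is \cite[Theorem 4.2]{gz}), use the comparison $\omega_\a\leq c\,\omega_\b$ of K\"ahler forms, and rely on the stability of $\cE^1$ under enlarging the K\"ahler reference form. The only cosmetic difference is that you rescale the test function to $\tfrac{1}{c}\psi\in\psh(X,\omega_\a)$, whereas the paper keeps $\psi$ fixed, dominates the measure $(\omega_\a+dd^c\f_\mu)^n$ by $(c\,\omega_\b+dd^c\f_\mu)^n$, and works with both $\psi$ and $\f_\mu$ inside $\cE^1(X,c\,\omega_\b)$, concluding via the mixed energy estimate \cite[Proposition 2.5]{gz}. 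The ``main obstacle'' you flag --- that $\tfrac{1}{c}\psi$ (equivalently, $\psi$ and $\f_\mu$ in the larger class) retains full mass and finite energy after changing the K\"ahler form --- is precisely what the paper invokes \cite[Theorem 3.1]{dn} for; this is the reference you want rather than \cite{begz} or \cite{gz}.
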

\begin{proof}
Pick $\omega_1$ and $\omega_2$ K{\"a}hler forms in $\a$ and $\b$, respectively. We suppose $\mu\in \MA(\cE^1(X,\a))$ and we write $$\mu=\frac{(\omega_1+dd^c\f_{\mu})^n}{\vol(\a)}.$$ We want to show that there exists $\psi_\mu\in \cE^1(X,\omega_2)$ such that $\mu=\frac{(\omega_1+dd^c\psi_{\mu})^n}{\vol(\b)}$. By \cite[Theorem 4.2]{gz}, it is equivalent to showing that $\cE^1(X,\omega_2)\subset L^1(\mu)$. We recall that since $\omega_1,\omega_2$ are K{\"a}hler forms, there exists $C>1$ such that $\omega_1\leq C\omega_2$. Now, for all $\psi\in \cE^1(X,\omega_2)$, $\psi\leq 0$, 
$$\int_X (-\psi) d\mu =\frac{1}{\vol(\a)}\int_X (-\psi) (\omega_1+dd^c\f_{\mu})^n \leq \frac{1}{\vol(\a)}\int_X (-\psi) (C\omega_2+dd^c\f_{\mu})^n<+\infty.$$
The finiteness of the above integral follows from \cite[Proposition 2.5]{gz} and from the fact that \cite[Theorem 3.1]{dn} insures $\psi,\f_\mu\in \cE^1(X,C\omega_2)$.
\end{proof}

\subsection{Non invariance property}\label{nonI}

The notion of finite energy for non pluripolar measures is not invariant under bimeromorphic change of coordinates. Indeed, the Proposition below points out that $\mu\in \MA(\cE^1(X, \{\tilde{\omega}\}))$ but $\pi_\star \mu\notin \MA(\cE^1(\mathbb{P}^2,\{ \omega_{FS}\}))$.  \\
\indent More generally, Definition \ref{fe} depends on the cohomology class: in the following we show that there exist a measure $\mu$ and big classes $\a, \b\in H^{1,1}(X,\R)$ such that $\mu\in \MA(\cE^1(X,\a))$ but $\mu\notin \MA(\cE^1(X,\b))$.

\begin{pro}\label{nofin}
Let $\pi: X\rightarrow \mathbb{P}^2$ be the blow-up at one point $p$ and set $E:=\pi^{-1}(p)$. Then there exists a probability measure $\mu$ and a K\"ahler class $\{\tilde{\omega}\}$ on $X$ such that 

$$\mu\in \MA\left(\cE^1(X,\{\tilde{\omega}\})\right) \quad \mbox{but} \quad\mu\notin \MA\left(\cE^1(X,\{\pistar \omega_{FS}\})\right)$$
and furthermore, $ \pistarb\mu\notin \MA\left(\cE^1(\mathbb{P}^2,\{\pistarb \tilde{\omega} \})\right).$
\end{pro}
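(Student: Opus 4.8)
The plan is to produce the pathology by transporting a carefully chosen radial/toric singularity between the two cohomology classes on the blow-up $X$ of $\PP^2$ at $p$, and then pushing it down to $\PP^2$. Let $E=\pi^{-1}(p)$ and recall that $H^{1,1}(X,\R)$ is spanned by $\pistar\{\omega_{FS}\}$ and $E$; the class $\{\pistar\omega_{FS}\}$ lies on the boundary of the K\"ahler cone (it is nef and big but not K\"ahler), while $\{\tilde\omega\}:=\{\pistar\omega_{FS}\}-\e E$ is K\"ahler for $\e>0$ small. The point of the construction is that the ``same'' current near $E$ has very different behaviour relative to these two classes: a potential that is mildly singular along $E$ and hence in $\cE^1$ for the K\"ahler class $\{\tilde\omega\}$ can fail to have finite energy in the nef-but-not-K\"ahler class $\{\pistar\omega_{FS}\}$, because with respect to the latter the reference current $V_{\pistar\omega_{FS}}$ already has minimal singularities concentrated on $E$ and the relative energy integral $\int_X|\f-V|\,\langle T^n\rangle$ blows up.

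First I would fix local coordinates near a point of $E$ in which $E=\{z_1=0\}$, and build a measure $\mu$ supported near $E$ whose potential, read off in the class $\{\tilde\omega\}$, is of the divisorial type of Proposition \ref{divisorial} with exponent $\alpha$ just above $1/2$ along the single divisor $E$ — more precisely, a $T$-invariant (toric) measure so that the criterion $\phi^*\in L^1(P,dp)$ from \cite{g} applies and lets me compute energies explicitly. For such a choice, $\mu\in\MA(\cE^1(X,\{\tilde\omega\}))$ holds by Proposition \ref{divisorial} (or directly by the toric criterion). Next I would compute the solution $S=\pistar\omega_{FS}+dd^c\f\in\cE(X,\{\pistar\omega_{FS}\})$ with $\langle S^n\rangle/\vol=\mu$ guaranteed by \cite[Theorem 3.1]{begz}, and show $\f\notin\cE^1(X,\{\pistar\omega_{FS}\})$: this is where I must use that the volume of $\{\pistar\omega_{FS}\}$ is ``used up'' away from $E$ and that near $E$ the function $V_{\pistar\omega_{FS}}$ has a logarithmic pole, so the weight $|\f-V_{\pistar\omega_{FS}}|$ against $\langle S^n\rangle$ — which is comparable to $\mu$ near $E$ — is not integrable for the borderline exponent. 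Concretely I expect the computation to reduce, via polar coordinates / the Legendre-transform dictionary, to the divergence of an integral like $\int_0 (-\log\rho)^{\,?}\,d\nu(\rho)$, exactly as in \eqref{cond} and the Example following it, with the exponent shifted by one across the two classes because $V_{\pistar\omega_{FS}}$ contributes an extra $\log$.

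For the pushforward statement, I would note that $\pistarb\mu$ is a probability measure on $\PP^2$ (it is $\pistar$ of a non-pluripolar measure, and since $\pi$ is an isomorphism outside $E$ and $\mu$ puts no mass on the pluripolar exceptional set, $\pistarb\mu$ is non-pluripolar) in the class $\{\pistarb\tilde\omega\}=\{\omega_{FS}\}-\e\{\pistarb E\}=\{\omega_{FS}\}$ up to scaling — more carefully, $\pistarb\tilde\omega$ represents a multiple of $\omega_{FS}$, so $\cE^1(\PP^2,\{\pistarb\tilde\omega\})=\cE^1(\PP^2,\{\omega_{FS}\})$. Then $\pistarb\mu\notin\MA(\cE^1(\PP^2,\{\omega_{FS}\}))$ because pulling back a finite-energy solution $\omega_{FS}+dd^c\psi$ on $\PP^2$ along $\pi$ would give a $\pistar\omega_{FS}$-psh function $\pistar\psi$ with the same relative energy (pullback of a smooth form and of potentials, and $\langle(\pistar(\omega_{FS}+dd^c\psi))^n\rangle=\pistar\langle(\omega_{FS}+dd^c\psi)^n\rangle$ since $\pi$ is birational), contradicting $\f\notin\cE^1(X,\{\pistar\omega_{FS}\})$; here one uses that $\cE^1$-membership is a biholomorphic invariant on the common open set $X\setminus E\cong\PP^2\setminus\{p\}$ together with the fact that a single point is negligible for non-pluripolar products.

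The main obstacle I anticipate is the middle step: controlling the solution $S\in\cE(X,\{\pistar\omega_{FS}\})$ precisely enough near $E$ to verify $\f\notin\cE^1$. One does not have an explicit formula for $\f$ in general, so I would engineer $\mu$ to be toric with respect to the torus action on $X$ fixing $E$, reducing everything to real Monge--Amp\`ere / Legendre transforms on the moment polytope of $\{\pistar\omega_{FS}\}$ (a triangle, versus the trapezoid of $\{\tilde\omega\}$), where the membership $\f\in\cE^1$ becomes the integrability $\phi^*\in L^1$ of an explicit power of a linear functional vanishing on the edge corresponding to $E$; the exponent at which this integral converges is genuinely different for the triangle and the trapezoid, and choosing $\alpha\in(1/2,\cdot)$ in that gap yields the separation. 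Establishing the comparison $\langle S^n\rangle\sim\mu\sim\langle T^n\rangle$ near $E$ (so the energy integrals are governed by the same weight) and the logarithmic behaviour of $V_{\pistar\omega_{FS}}$ along $E$ are the two technical lemmas I would need to state and prove carefully.
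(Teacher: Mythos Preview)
Your proposal rests on a mechanism that is not correct. You assert that $V_{\pistar\omega_{FS}}$ has a logarithmic pole along $E$, and that this extra $\log$ is what shifts the critical exponent between the two classes. But $\pistar\omega_{FS}$ is a smooth \emph{semipositive} form on $X$, so the constant $0$ is $\pistar\omega_{FS}$-psh, and hence $V_{\pistar\omega_{FS}}\equiv 0$. The potential with minimal singularities is bounded; the ``technical lemma'' you isolate (``logarithmic behaviour of $V_{\pistar\omega_{FS}}$ along $E$'') is therefore false, and with it the heuristic for why the energy integral should diverge in $\{\pistar\omega_{FS}\}$ collapses. Relatedly, your plan is to solve $\langle S^2\rangle=\vol(\beta)\mu$ in $\{\pistar\omega_{FS}\}$ and analyse the (inexplicit) solution $S$ near $E$; this is precisely the step you flag as the ``main obstacle,'' and you give no argument for it beyond the incorrect $V$-heuristic.

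The paper proceeds quite differently in two respects. First, it never solves the Monge--Amp\`ere equation in the big class: it uses the dual characterization $\mu\in\MA(\cE^1(X,\te))\Leftrightarrow\cE^1(X,\te)\subset L^1(\mu)$, so it suffices to exhibit one $\psi\in\cE^1(X,\pistar\omega_{FS})$ with $\int_X(-\psi)\,d\mu=+\infty$. Second, the source of the exponent gap is not $V_\te$ but the K\"ahler form $\tilde\omega$ itself. One writes $\tilde\omega=(\pistar\omega'-[E])+\pistar\omega_{FS}$ with $\omega'$ a positive current on $\PP^2$ whose local potential near $p$ is $\e\log\|z\|$; defining $\mu$ as the Monge--Amp\`ere of the radial potential $\pistar\tilde\f_p$, $\tilde\f_p\sim -(-\log\|z\|)^p$ with $p=\tfrac12-\delta$, one expands on $\PP^2\setminus\{p\}$
\[
\vol(\tilde\omega)\,\pistarb\mu=(\omega'+\omega_{FS}+dd^c\tilde\f_p)^2
=2\,\omega'\wedge(\omega_{FS}+dd^c\tilde\f_p)+(\omega_{FS}+dd^c\tilde\f_p)^2,
\]
and it is the cross term, carrying $dd^c\log\|z\|$, that makes $\int(-\tilde\f_\e)\,d\pistarb\mu$ diverge for the test function $\psi=\pistar\tilde\f_\e$ with $\e=\tfrac23-\delta'$. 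The key input (from \cite{dn}) is that for such $\e$ one has $\pistar\tilde\f_\e\in\cE^1(X,\pistar\omega_{FS})$ but $\pistar\tilde\f_\e\notin\cE^1(X,\tilde\omega)$: the radial threshold is $n/(n+1)=2/3$ in the semipositive class and drops to $1/2$ in the K\"ahler class because of this additional $dd^c\log\|z\|$ factor in the Monge--Amp\`ere expansion. In particular the asymmetry runs opposite to what you assume --- $\cE^1$ for the K\"ahler class is \emph{more} restrictive near $E$, not less --- and the separation is detected by a test function, not by analysing the solution.
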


\begin{proof}
Let $U$ be a local chart of $\mathbb{P}^2$ such that $p \rightarrow (0, 0)\in U$. Fix a positive $(1,1)$-current $\omega'$ on $ \mathbb{P}^2$ such that its global potential on $U$ can be written as $\varepsilon \chi(z) \log\|z\|$ where $\chi$ is a cut-off fucntion so that $\chi\equiv 1$ near $(0,0)$ and $\varepsilon>0$. Then $\tilde{\omega}:=(\pistar \omega'-[E])+\pistar \omega_{FS}$ is a K{\"a}hler form and clearly $\tilde{\omega}\geq \pistar \omega_{FS}$. Let $\a=\{\tilde{\omega}\}$ and $\beta=\pistar \{\omega_{FS}\}$ with $\vol(\omega_{FS})=1$. On $U$ we define 
$$\f_{p} := \frac{1}{C}\chi \cdot u_{p} -K_{p}$$
where $u_{p} := -(- \log\|z\|)^{p}$, $\chi$ is a smooth cut-off function such that $\chi \equiv 1$ on $\mathbb{B}$ and $\chi \equiv 0$ on $U \setminus \mathbb{B}(2)$, $K_{p}$ is a positive constant such that $\f_{p}\leq −1$ and $C>0$. Choosing $C$ big enough $\f_{p}$ induces a $\omega_{FS}$-psh function on $\mathbb{P}^2$, say $\tilde{\f}_{p}$.
For $p=\frac{1}{2}-\delta$ with $\delta>0$ small enough, we set
$$\mu:=\frac{(\tilde{\omega}+dd^c \pistar \tilde{\f}_p)^2}{\vol(\tilde{\omega})}.$$ We will show that $\mu\notin \MA(\cE^1(X,\b))$, or better that there exists a function $\psi\in \cE^1(X,\pistar \omega_{FS})$, $\psi\leq 0$, such that $\int_X (-\psi)d\mu = +\infty$ (see \cite[Theorem 4.2]{gz}). We pick $\psi:=\pistar \tilde{\f}_\varepsilon$ with $\varepsilon=\frac{2}{3}-\delta'$, $\delta'>0$ small enough. Observe that $\psi\in \cE^1(X,\pistar \omega_{FS})$ but $\psi\notin \cE^1(X,\tilde{\omega})$ (see \cite[Example 3.5]{dn}). We claim that $\int_X (-\pistar \tilde{\f}_\varepsilon) (\tilde{\omega}+dd^c \pistar \tilde{\f}_p)^2 = +\infty$. First note that on $\mathbb{P}^2\setminus \{p\}$
$$\vol(\tilde{\omega})\pi_{\star} \mu= (\omega'+\omega_{FS}+dd^c \tilde{\f}_p)^2=2\omega'\wedge (\omega_{FS}+dd^c \tilde{\f}_p)+(\omega_{FS}+dd^c \tilde{\f}_p)^2. $$  Thus
\begin{eqnarray*}
\int_X (-\pistar \tilde{\f}_\varepsilon) d\mu &=& \int_{\mathbb{P}^2} (-\tilde{\f}_\varepsilon) d\pi_{\star}\mu \\
&=& \frac{1}{3} \left[2\int_{\mathbb{P}^2} (-\tilde{\f}_\varepsilon)\,\omega'\wedge (\omega_{FS}+dd^c \tilde{\f}_p)+\int_{\mathbb{P}^2} (-\tilde{\f}_\varepsilon) (\omega_{FS}+dd^c \tilde{\f}_p)^2 \right].
\end{eqnarray*}
We infer that $$\int_{\mathbb{B}(\frac{1}{2})\setminus \{(0,0)\}} |(-\log\|z\|)^{\varepsilon}| \,dd^c\log\|z\|\wedge dd^c [\chi(\log\|z\|)]=+\infty$$
where $\chi(t)=-(-t)^p$, hence the conclusion. Indeed on $\mathbb{B}(\frac{1}{2})\setminus\{(0,0)\}$,
$$dd^c\log\|z\|\wedge dd^c [\chi(\log\|z\|)]= \frac{A}{\|z\|^4} \chi''(\log\|z\|) dz_1\wedge d\bar{z}_1\wedge dz_2\wedge d\bar{z}_2$$ where $A$ is positive constant. Therefore we have
\begin{eqnarray*}
&&\int_{\mathbb{B}(\frac{1}{2})\setminus\{(0, 0)\}} \frac{1}{\|z\|^4 |\log\|z\||^{2-p-\varepsilon}} dz_1 \wedge d\bar{z}_1 \wedge dz_2 \wedge d\bar{z}_2\\
&&= C' \int_{0}^{\frac{1}{2}} \frac{1}{\rho (-\log\rho)^{2-p-\varepsilon}}\,\,d\rho \\
&&=C'\int_{-\log\frac{1}{2}}^{+\infty} \frac{1}{s^{2-p-\varepsilon}} \,ds=+\infty 
\end{eqnarray*}
since $2-p-\varepsilon\leq 1$.\\
A similar computation shows that $\pistar \tilde{\f}_p\in \cE(X,\tilde{\omega})$ and so $\mu\in \MA(\cE^1(X,\a))$ by construction.
\end{proof}


\end{document}